\documentclass[11pt]{article}
\usepackage[top=2.54cm, bottom=2.54cm, left=2.5cm, right=2.5cm]{geometry}

\usepackage[T1]{fontenc}
\usepackage[utf8]{inputenc}
\usepackage{lmodern}
\usepackage{microtype}

\usepackage[tbtags]{mathtools}
\usepackage{amssymb, latexsym}
\usepackage{amsthm}
\usepackage{mathrsfs}

\usepackage{enumitem}
\usepackage{aliascnt}

\usepackage{xcolor}
\usepackage[numbers,sort&compress]{natbib}
\usepackage[colorlinks=true, linkcolor=blue, citecolor=red, urlcolor=magenta]{hyperref}
\usepackage[nameinlink,capitalize]{cleveref}

\usepackage{lineno}

\usepackage{graphicx}
\usepackage{appendix}
\usepackage{fancyhdr}

\usepackage{float}
\usepackage{pict2e}
\usepackage{tikz}
\usepackage{tikz-3dplot}
\usepackage{tkz-graph}
\usetikzlibrary{arrows,shapes,chains}
\usepackage {verbatim}
\usepackage{scalerel}

\allowdisplaybreaks[4]
\numberwithin{equation}{section}

\theoremstyle{plain}
\newtheorem{theorem}{Theorem}[section]

\newaliascnt{lemma}{theorem}
\newtheorem{lemma}[lemma]{Lemma}
\aliascntresetthe{lemma}

\newaliascnt{corollary}{theorem}
\newtheorem{corollary}[corollary]{Corollary}
\aliascntresetthe{corollary}

\theoremstyle{remark}

\newaliascnt{remark}{theorem}

\aliascntresetthe{remark}

\theoremstyle{definition}

\newaliascnt{definition}{theorem}
\newtheorem{definition}[definition]{Definition}
\aliascntresetthe{definition}

\newaliascnt{conjecture}{theorem}
\newtheorem{conjecture}[conjecture]{Conjecture}
\aliascntresetthe{conjecture}

\newaliascnt{problem}{theorem}

\aliascntresetthe{problem}

\newaliascnt{question}{theorem}
\newtheorem{question}[question]{Question}
\aliascntresetthe{question}

\newaliascnt{claim}{theorem}

\aliascntresetthe{claim}

\newaliascnt{observation}{theorem}

\aliascntresetthe{observation}

\newaliascnt{fact}{theorem}

\aliascntresetthe{fact}

\crefname{theorem}{Theorem}{Theorems}
\crefname{lemma}{Lemma}{Lemmas}
\crefname{corollary}{Corollary}{Corollaries}

\crefname{remark}{Remark}{Remarks}
\crefname{problem}{Problem}{Problems}
\crefname{question}{Question}{Questions}
\crefname{conjecture}{Conjecture}{Conjectures}

\crefname{definition}{Definition}{Definitions}
\crefname{claim}{Claim}{Claims}
\crefname{observation}{Observation}{Observations}
\crefname{fact}{Fact}{Facts}

\usepackage{nomencl}
\makenomenclature

\newcommand{\eps}{\varepsilon}
\newcommand{\cP}{\mathcal P}
\newcommand{\cQ}{\mathcal Q}
\newcommand{\cR}{\mathcal R}
\newcommand{\F}{\mathcal F}
\newcommand{\G}{\mathcal G}
\newcommand{\T}{\mathcal T}
\newcommand{\V}{\mathcal V}
\newcommand{\E}{\mathbb E}
\newcommand{\ex}{\operatorname{ex}}
\newcommand{\Forb}{\operatorname{Forb}}
\newcommand{\gammakl}{\gamma_\ell^{(k)}}

\title{The $\ell$-degree Tur\'an densities of hypergraphs: infinite families and their finite subfamilies}
\author{Hao Lin \thanks{School of Mathematical Sciences, University of Science and Technology of China, Hefei, Anhui 230026, China. Email: \texttt{haolinz6@ustc.edu.cn}.}
\and
Wenling Zhou \thanks{School of Mathematics, Shandong University, Jinan, Shandong 250100, China. Email: \texttt{gracezhou@sdu.edu.cn}.}}
\date{}

\begin{document}
\maketitle

\begin{abstract}
Let $k>\ell\ge 1$ be integers.
For a family $\mathcal{F}$ of $k$-uniform hypergraphs ($k$-graphs), its \emph{$\ell$-degree Tur\'an density} $\gamma^{(k)}_\ell(\mathcal{F})$ is the asymptotic maximum of the normalized minimum $\ell$-degree over all $\mathcal{F}$-free $k$-graphs.
In this paper, we prove that for all integers $k>\ell>2$ and every $\alpha\in[0,1)$, there exists an infinite family $\mathcal{F}$ of $k$-graphs such that $\gamma^{(k)}_\ell(\mathcal{F})=\alpha$, while every finite subfamily $\mathcal{F}'\subset \mathcal{F}$ satisfies $\gamma^{(k)}_\ell(\mathcal{F}')>\alpha$. This confirms the conjecture of Mubayi and Zhao on codegree Tur\'an densities for every $k > 3$. 
As an immediate consequence, the set $\Gamma^{k,\ell}_{\infty}$ of $\ell$-degree Tur\'an densities of nonempty $k$-graph families is $[0,1)$ whenever $k>\ell>2$, which answers a question of Lo and Markstr\"om from 2014.

\medskip
\noindent\textbf{Keywords:} Hypergraph, minimum degree, Tur\'an density,
forbidden family.
\end{abstract}

\section{Introduction}\label{sec:introduction}

Tur\'an-type problems ask how dense a large graph or hypergraph can be
while avoiding prescribed subgraphs.  For an integer $k\ge2$, a
\emph{$k$-uniform hypergraph}, or simply a \emph{$k$-graph}, is a pair
$H=(V(H),E(H))$ with $E(H)\subseteq\binom{V(H)}k$.  We write
$v(H)=|V(H)|$ and $e(H)=|E(H)|$.  
Given a nonempty family $\F$ of
$k$-graphs, each containing at least one edge, a $k$-graph is
\emph{$\F$-free} if it contains no member of $\F$ as a subgraph.  The
classical Tur\'an number is
\[
 \ex(n,\F)=\max\{e(H):v(H)=n,\ H\text{ is }\F\text{-free}\},
\]
and the Tur\'an density of $\F$ is
\[
\pi(\F)=\lim_{n\to\infty}\frac{\ex(n,\F)}{\binom nk}.
\]
The limit exists because the normalized Tur\'an numbers form a
non-increasing sequence.  If $\F=\{F\}$, we write $\ex(n,F)$ and
$\pi(F)$.

Let $\Pi^{(k)}$, $\Pi_{\mathrm{fin}}^{(k)}$, and
$\Pi_{\infty}^{(k)}$ denote the sets of Tur\'an densities arising from,
respectively, a single forbidden $k$-graph, a nonempty finite forbidden
family, and an arbitrary nonempty forbidden family.  Thus,
\[
\Pi^{(k)}\subseteq\Pi_{\mathrm{fin}}^{(k)}
\subseteq\Pi_{\infty}^{(k)}\subseteq[0,1).
\]
For graphs, the Erd\H{o}s-Stone-Simonovits theorem~\cite{ErdosStone1946,ErdosSimonovits1966} determines the Tur\'an density of every graph in terms of its chromatic number, and consequently
\begin{equation}\label{eq:graph-spectrum}
\Pi^{(2)}=\Pi_{\mathrm{fin}}^{(2)}=\Pi_{\infty}^{(2)}
 =\{(r-1)/r: r\in \mathbb N^+\}. 
\end{equation}

However, for $k\ge 3$, we do not have an analog of the Erd\H{o}s-Stone-Simonovits theorem, and much less is known about the Tur\'an densities of $k$-graphs.
Tur\'an in his original work~\cite{Tu-graph} asked for the Tur\'an number of complete $k$-graphs. Later, Erd\H{o}s offered \$1000 for determining $\pi(K_n^{(k)})$ for a complete $k$-graph $K_n^{(k)}$ with $n> k\ge3$. However, until now, no case of this problem with $n> k\ge 3$ has been resolved, even asymptotically, despite substantial effort.
For further results along this line of research,
we refer to the surveys of Keevash~\cite{Keevash2011} and
Sidorenko~\cite{Sidorenko1995}.

A different line of research investigates properties of the set of hypergraph Tur\'an densities.
In 1984, Frankl and R\"odl~\cite{FranklRodl1984} disproved a \$1000 conjecture of Erd\H{o}s by showing that, for $k\ge3$, the set $\Pi^{(k)}_\infty$ is not well-ordered. 
Later,
Pikhurko~\cite{Pikhurko2014} established several fundamental properties
of these spectra; in particular, his results imply that
$\Pi_{\infty}^{(k)}$ is uncountable and strictly contains
$\Pi_{\mathrm{fin}}^{(k)}$.

\subsection{\texorpdfstring{$\ell$-degree}{ell-degree} Tur\'an densities}

A natural local refinement of the classical Tur\'an problem replaces
the total number of edges by a minimum $\ell$-degree condition.  Let
$1\le\ell<k$, let $H$ be a $k$-graph, and let
$S\in\binom{V(H)}\ell$.  The \emph{neighborhood} of $S$ is
\[
 N_H(S)=\left\{T\in\binom{V(H)\setminus S}{k-\ell}:
S\cup T\in E(H)\right\},
\]
its \emph{degree} is $d_H(S)=|N_H(S)|$, and the minimum $\ell$-degree of $H$
is $\delta_\ell(H)=\min\{ d_H(S): S\in\binom{V(H)}\ell \}$.
For a family $\F$ of $k$-graphs, define
\[
 \ex_\ell^{(k)}(n,\F)=
 \max\{\delta_\ell(H):v(H)=n,\ H\text{ is }\F\text{-free}\}.
\]
The corresponding \emph{$\ell$-degree Tur\'an density} is
\begin{equation}\label{eq:density}
 \gammakl(\F)=
 \lim_{n\to\infty}
 \frac{\ex_\ell^{(k)}(n,\F)}{\binom{n-\ell}{k-\ell}}.
\end{equation}
The limit exists for arbitrary forbidden families: Mubayi and
Zhao~\cite{MubayiZhao2007} proved this in the codegree case
$\ell=k-1$, and Lo and Markstr\"om~\cite{LoMarkstrom2014} proved it
for all $1\le\ell<k$.  We use the convention
$\gamma_\ell^{(k)}(\varnothing)=1$.  A standard averaging argument gives
\[
 \gamma_{k-1}^{(k)}(\F)\le\gamma_{k-2}^{(k)}(\F)
 \le\cdots\le\gamma_1^{(k)}(\F)=\pi(\F).
\]

As in the classical setting, let $\Gamma^{k,\ell}$,
$\Gamma_{\mathrm{fin}}^{k,\ell}$, and
$\Gamma_{\infty}^{k,\ell}$ denote the sets of $\ell$-degree Tur\'an
densities arising from, respectively, a single forbidden $k$-graph, a
nonempty finite forbidden family, and an arbitrary nonempty forbidden
family. Thus,
\begin{equation}\label{eq:spectrum}
 \Gamma^{k,\ell}\subseteq\Gamma^{k,\ell}_{\mathrm{fin}}
 \subseteq\Gamma^{k,\ell}_{\infty}\subseteq[0,1).
\end{equation}
The first two sets are countable, whereas the third may be uncountable. When $k=2$ (and hence $\ell=1$), these three sets coincide with the
graph spectrum in \eqref{eq:graph-spectrum}. By contrast, Mubayi and Zhao~\cite{MubayiZhao2007} proved that $\Gamma^{k,k-1}_\infty$ is dense in $[0,1)$ for every $k\ge3$  and posed the following conjecture.
\begin{conjecture}[{\cite[Conjecture~1.7]{MubayiZhao2007}}]
	\label{conj:zhao}
	For every integer $k\ge3$ and every $\alpha\in[0,1)$, there exists an
	infinite family $\F$ of $k$-graphs such that $\gamma_{k-1}^{(k)}(\F)=\alpha$
	and $\gamma_{k-1}^{(k)}(\F')>\alpha$
	for every finite subfamily $\F'\subseteq\F$.
\end{conjecture}

Subsequently, Lo and Markstr\"om~\cite[Theorem~1.7]{LoMarkstrom2014}
proved that $\Gamma_{\infty}^{k,\ell}$ is dense in $[0,1)$ for every
$k>\ell>1$.  They further posed the following natural question.

\begin{question}[Lo and Markstr\"om~\cite{LoMarkstrom2014}]
	\label{ques:lo-markstrom}
	Is it true that $\Gamma_{\infty}^{k,\ell}=[0,1)$
	for every pair of integers $k>\ell>1$?
\end{question}

There has also been substantial recent progress for finite forbidden
families.  Gao, Pikhurko, Rong, and Sun
\cite{GaoPikhurkoRongSun2026} proved that every rational number in
$[0,1)$ is the codegree Tur\'an density of a finite family of
$k$-graphs.  Tang and the second author~\cite{TangZhou2026} extended this
rational-value result to every pair $k>\ell>k/2$.  These results yield
many values in $\Gamma_{\mathrm{fin}}^{k,\ell}$, including all rational
values in the ranges above.  Nevertheless,
$\Gamma_{\mathrm{fin}}^{k,\ell}$ is countable, so such results cannot by
themselves show that an entire interval is contained in
$\Gamma_{\infty}^{k,\ell}$.

\subsection{Main result and proof overview}
A finite approximation result of Lo and Markstr\"om
\cite[Corollary~2.5]{LoMarkstrom2014} shows that, for every family $\mathcal F$
of $k$-graphs,
\[
\gamma_\ell^{(k)}(\mathcal F)
=
\inf\bigl\{
\gamma_\ell^{(k)}(\mathcal F'):
\mathcal F'\subseteq\mathcal F,\ 
\mathcal F'\text{ is finite}
\bigr\}.
\]
Thus, the density of an arbitrary forbidden family can be
approximated arbitrarily closely by the densities of its finite
subfamilies.

The natural $\ell$-degree analog of \cref{conj:zhao} asks whether,
for every pair $k>\ell>1$ and every $\alpha\in[0,1)$, there exists an
infinite family $\F$ of $k$-graphs such that $\gamma_\ell^{(k)}(\F)=\alpha
<\gamma_\ell^{(k)}(\F')$
for every finite subfamily $\F'\subseteq\F$.  Our main theorem proves
this statement whenever $\ell>2$.

\begin{theorem}\label{thm:main}
	For every pair of integers $k>\ell>2$ and every $\alpha\in[0,1)$,
	there exists an infinite family $\mathcal F$ of $k$-graphs such that
	\[
	\gamma_\ell^{(k)}(\mathcal F)
	= \alpha
	< \gamma_\ell^{(k)}(\mathcal F')
	\]
	for every finite subfamily $\mathcal F'\subseteq\mathcal F$.
\end{theorem}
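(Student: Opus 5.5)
We propose to build $\mathcal F$ from a one-parameter family of \emph{pattern constructions} whose $\ell$-degree densities vary continuously and are strictly monotone, and to forbid the configurations that separate these constructions from a limiting one. Concretely, fix $k>\ell>2$ and $\alpha\in[0,1)$. First we construct, for every real $\beta\in[0,1)$, a ``blow-up type'' $k$-graph $H_\beta(n)$ with $\delta_\ell(H_\beta(n))=(\beta+o(1))\binom{n-\ell}{k-\ell}$. One natural choice is a vertex partition whose part sizes encode $\beta$, with edges determined by a rule on how the $k$ vertices meet the parts. Since $\ell\ge 3$, there is enough room in an $\ell$-set to use a mixed rule: a ``parity-type'' rule on triples inside the $\ell$-set, which keeps local structure rigid, and a free parameter that tunes the degree. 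We also need a sequence of rational or finite-pattern approximants $\beta_m\downarrow\alpha$. The hypothesis $\ell>2$ is exactly what lets the degree of every $\ell$-set be controlled simultaneously. For $\ell\le 2$ the degree of small sets is forced by global edge density, which is presumably why the theorem excludes that range.

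Second, we define $\mathcal F$ to be the family of all $k$-graphs $F$ that are \emph{not} contained in $H_\alpha(n)$ for any $n$, restricted to a carefully chosen subclass (for example, those not embeddable into the limiting pattern but embeddable into every $H_{\beta}$ with $\beta>\alpha$ near $\alpha$ only for large enough $v(F)$). The upper bound $\gamma_\ell^{(k)}(\mathcal F)\le\alpha$ should come from a stability/removal-type argument. If $H$ is $\mathcal F$-free with $\delta_\ell(H)\ge(\alpha+\varepsilon)\binom{n-\ell}{k-\ell}$, then every finite subgraph of $H$ embeds in the pattern of $H_\alpha$. By compactness, $H$ is then a subgraph of a blow-up of the pattern, so its minimum $\ell$-degree is at most $\alpha+o(1)$, a contradiction. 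The lower bound $\gamma_\ell^{(k)}(\mathcal F)\ge\alpha$ is immediate, since $H_\alpha(n)$ is $\mathcal F$-free by definition.

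Third, for the strict inequality on finite subfamilies, we fix a finite $\mathcal F'\subseteq\mathcal F$ and let $N=\max_{F\in\mathcal F'}v(F)$. We will show that some $H_{\beta_m}(n)$ with $\beta_m>\alpha$ is $\mathcal F'$-free. The intended mechanism is that $H_{\beta_m}$ and $H_\alpha$ look identical on every set of at most $N$ vertices once $m$ is large; this is local indistinguishability, as in Pikhurko-type ``pattern'' arguments. Then $\gamma_\ell^{(k)}(\mathcal F')\ge\beta_m>\alpha$. For this to work we must choose $\mathcal F$ as the set of $k$-graphs that fail to embed in the limit object but whose minimal obstruction size tends to infinity. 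Equivalently, the patterns $P_m$ of $H_{\beta_m}$ converge to the limit pattern $P$ in the sense that every $N$-vertex subpattern of $P_m$, for $m\ge m(N)$, maps into $P$, even though $P_m$ itself does not.

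The main obstacle is step two combined with step three. We need an infinite, recursively defined pattern $P$ (for example, an iterated blow-up in which degrees accumulate geometrically to $\alpha$) whose blow-ups attain minimum $\ell$-degree exactly $\alpha$ with no slack. We also need a compactness argument showing that every $k$-graph all of whose finite subgraphs embed in $P$ has $\delta_\ell\le\alpha+o(1)$. Minimum-degree conditions are not inherited by subgraphs, so this cannot be done by a direct Lagrangian argument as for $\pi$. Instead, we would first try a symmetrization-free counting argument: sample a random $\ell$-set, use the high $\ell$-degree to find many edges forming a bounded-size witness, and use $\ell\ge3$ to force that witness to contain a structure absent from $P$. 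Verifying that this forced structure lies in $\mathcal F$ is where the bulk of the technical work should go.
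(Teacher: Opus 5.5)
\textbf{The gap: no construction with an exactly computable density.} The abstract mechanism in your third step is sound and is essentially the skeleton of the paper's argument: take $\mathcal F$ to be the full forbidden family of a monotone class $\mathcal Q$ that is a decreasing limit of classes of density $>\alpha$. But the outline supplies none of the content. You never give a construction whose $\ell$-degree density can be computed exactly and tuned to an arbitrary real $\alpha$. The ``parity-type rule on triples'' and the ``iterated blow-up'' are never specified, and nothing produces uncountably many exact values.

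The paper's missing idea is a cyclic $m$-partite construction. An $\ell$-set meeting at least two parts gets many extensions for free. An $\ell$-set inside $V_i$ extends only by sets $T$ with one vertex in each of $k-\ell$ of the next $t$ parts. For each such $T$, the admissible $\ell$-sets form a blow-up of an auxiliary $\ell$-graph $G$ through a map $\phi_{i,T}$, chosen independently for each $T$. This gives density exactly $c_{m,t}\lambda(G)$, with errors uniform in $G$. Hence a union over a family $\mathcal G$ has density $c_{m,t}\sup_{G\in\mathcal G}\lambda(G)$.

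The upper bound must hold for every (possibly very unbalanced) partition. It comes from a cyclic averaging lemma, which finds a part $V_i$ of size at least $\sqrt n$ with at most the balanced number of extensions, together with Maclaurin's inequality and the Lagrangian bound applied to the auxiliary blow-up for each fixed $T$. The lower bound uses random $\phi_{i,T}$ and Hoeffding. So your dismissal of a Lagrangian argument is the wrong turn: the Lagrangian is applied to the link structure of each fixed extension, not to $H$.

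Your compactness step does not replace this. If $\mathcal F$ is the full forbidden family of a monotone class, every finite $\mathcal F$-free graph already lies in the class. Compactness is then vacuous, and the whole difficulty is bounding $\delta_\ell$ over all members of the class. If instead you restrict $\mathcal F$ to a subclass, $\mathcal F$-freeness no longer forces embedding into $P$.

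\textbf{Where the reals come from, and the role of $\ell>2$.} This is the other missing idea, and the only place $\ell>2$ is used. Your explanation of that hypothesis is not right: the construction and its density formula work verbatim for $\ell=2$. The paper needs $\sup\{\lambda(G):G\text{ an }\ell\text{-graph},\ \lambda(G)<u\}=u$ for all $u$ in an interval $(b_\ell,1)$. This follows from Pikhurko's theorem $\overline{\Lambda^{(\ell)}}=\Pi^{(\ell)}_\infty\cup\{1\}$ together with the Liu--Pikhurko interval $[1-\delta_\ell,1)\subseteq\Pi^{(\ell)}_\infty$. Both hold only for uniformity at least $3$; for graphs the Lagrangians are the discrete values $(r-1)/r$.

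The paper chooses $m,t$ with $\alpha<c_{m,t}<\alpha/b_\ell$ and sets $\tau=\alpha/c_{m,t}$. The classes $\mathcal Q(u)=\mathcal P(\{G:\lambda(G)<u\};m,t)$ are then nested with density $c_{m,t}u$. For $\tau_i\downarrow\tau$, the class $\mathcal Q=\bigcap_i\mathcal Q(\tau_i)$ is squeezed between $\mathcal Q(\tau)$ and each $\mathcal Q(\tau_i)$. This removes your ``main obstacle'' entirely: no single limit pattern attaining $\alpha$ with no slack is ever needed. Moreover, a finite $\mathcal F'\subseteq\mathcal F(\mathcal Q)=\bigcup_i\mathcal F(\mathcal Q(\tau_i))$ lies in one term of this increasing union, which gives your step three directly.

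\textbf{The case $\alpha=0$.} This case is out of reach of that scheme: it would force $\tau=0$, and no positive Lagrangian lies below $\ell!/\ell^\ell$. It needs a separate argument. The paper intersects the classes $\mathcal R_m$, of density $(k-\ell)!/m^{k-\ell}$, and shows that every finite intersection still has positive density, using a common refinement of the $m_j$-partitions. Your proposal has nothing corresponding to this.
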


For $\ell=k-1$, \cref{thm:main} proves \cref{conj:zhao} for every
$k>3$.  
By retaining only the density conclusion, we also obtain the
following complete description of the spectrum and answer  \cref{ques:lo-markstrom} whenever $k>\ell>2$.

\begin{corollary}\label{cor:spectrum}
For all integers $k>\ell>2$, one has
$\Gamma^{k,\ell}_{\infty}=[0,1)$.
\end{corollary}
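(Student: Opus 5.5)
The statement to prove is \cref{cor:spectrum}; it follows directly from \cref{thm:main}, keeping only the density conclusion. By \eqref{eq:spectrum} we already know $\Gamma^{k,\ell}_{\infty}\subseteq[0,1)$, so the plan is to prove the reverse inclusion.

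Fix integers $k>\ell>2$ and an arbitrary $\alpha\in[0,1)$. Applying \cref{thm:main} to this pair and to $\alpha$ gives an infinite family $\F$ of $k$-graphs with $\gamma_\ell^{(k)}(\F)=\alpha$. The only point to verify is that $\F$ is admissible in the definition of $\Gamma^{k,\ell}_{\infty}$, namely that it is nonempty and each member has at least one edge. Nonemptiness is automatic, since $\F$ is infinite. If some member of $\F$ had no edges, then for large $n$ every $n$-vertex $k$-graph would contain it, so $\gamma_\ell^{(k)}(\F)$ would be degenerate, which is incompatible with the strict inequality in \cref{thm:main}. In any case, the convention in the introduction is that forbidden families consist of $k$-graphs with at least one edge, and \cref{thm:main} is stated within that framework.

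Hence $\alpha=\gamma_\ell^{(k)}(\F)\in\Gamma^{k,\ell}_{\infty}$. Since $\alpha\in[0,1)$ was arbitrary, $[0,1)\subseteq\Gamma^{k,\ell}_{\infty}$, and together with \eqref{eq:spectrum} this gives $\Gamma^{k,\ell}_{\infty}=[0,1)$.

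There is no real obstacle: all the difficulty lies in \cref{thm:main}, and the corollary simply discards the finite-subfamily part of its conclusion.
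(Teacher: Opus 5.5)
Your proposal is correct and matches the paper, which obtains the corollary from \cref{thm:main} by keeping only the density conclusion, together with the inclusion $\Gamma^{k,\ell}_\infty\subseteq[0,1)$. Your side check on edgeless members is fine, though there is a more direct reason: the family in the paper is $\F(\cQ)$ for a monotone $\cQ$ containing $k$-graphs of arbitrarily large order, so $\cQ$ contains every edgeless $k$-graph and hence every member of $\F(\cQ)$ has an edge.
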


During the preparation of this manuscript, we learned of independent
work of Li and Liu~\cite{Li-Liu-26}, who proved that $\Gamma_{\infty}^{k,\ell}=[0,1)$
for all integers $k>\ell>1$ with $(k,\ell)\ne(3,2)$.  Their result
therefore answers \cref{ques:lo-markstrom} for a wider range of
parameters.  However, in the common range $k>\ell>2$, our result also ensures that every finite subfamily of the constructed forbidden family has density strictly larger than the prescribed value.

\paragraph{Proof overview.}
The main idea is to separate the behavior of an infinite forbidden
family from that of its finite subfamilies.  Rather than constructing
the forbidden graphs directly, we construct a collection of monotone
decreasing families $(\cQ_i)_{i\in I}$ and take $\cQ:=\bigcap_{i\in I}\cQ_i$.
Every graph outside $\cQ$ is excluded by at least one of the families
$\cQ_i$.  Consequently, every finite subfamily of the full forbidden
family of $\cQ$ is already excluded by the intersection of finitely
many $\cQ_i$.  Thus, the desired conclusion follows if the full
intersection has density $\alpha$, while every finite intersection
has density strictly larger than~$\alpha$.  

To construct monotone families with controllable densities, we use a
cyclic multipartite construction.  The vertex set is divided into
cyclically ordered parts.  The construction automatically gives many
extensions to $\ell$-sets meeting more than one part, while the
extensions of an $\ell$-set lying inside one part are governed by an
auxiliary $\ell$-graph $G$.  This separates the contribution of the
partition from that of the auxiliary graph and yields the density
formula $c_{m,t}\lambda(G)$,
or, when $G$ ranges over a family $\G$, $c_{m,t}\sup_{G\in\G}\lambda(G)$.
In this way, Lagrangian values of auxiliary $\ell$-graphs become
$\ell$-degree Tur\'an densities of monotone families of $k$-graphs.

For $\alpha>0$, we choose the cyclic parameters and a Lagrangian
threshold $\tau$ so that $\alpha=c_{m,t}\tau$.  Allowing all auxiliary
graphs with Lagrangian below $u$ gives nested monotone families whose
densities are $c_{m,t}u$.  Taking thresholds $u_i\downarrow\tau$
therefore produces a full intersection of density $\alpha$, whereas
every finite intersection is one of the earlier families and has
density strictly larger than $\alpha$.

For $\alpha=0$, we instead use cyclic multipartite families
$\cR_m$ whose densities tend to zero as $m\to\infty$.  Their full
intersection consequently has density zero.  The essential additional
step is to show that every finite intersection of the families
$\cR_m$ still has positive density.  This is achieved by refining the
corresponding vertex partitions simultaneously, so that the
requirements of all the finitely many constructions can be satisfied
at once.

\paragraph{Organization.}
The remainder of the paper is organized as follows. In \cref{sec:preliminaries}, we collect the notations and auxiliary
tools used later.  In \cref{sec:construction}, we
introduce the cyclic multipartite families and determine their
$\ell$-degree Tur\'an densities.  In \cref{sec:main-proof}, we prove
\cref{thm:main}. Finally,
in \cref{sec:conclusion}, we summarize the results and discuss further questions.

\section{Preliminaries}\label{sec:preliminaries}
In this section, we present the notations and auxiliary results used in the paper.
For a finite set $X$, let $\binom X k$ denote the family of its
$k$-element subsets. For a positive integer $n$, let
$[n]=\{1,\ldots,n\}$.
The complete $k$-graph on $n$ vertices is denoted by $K_n^{(k)}$.

We use Hoeffding's inequality~\cite{Hoeffding1963} in the following
form. If $X_1,\ldots,X_n$ are independent random variables with
values in $[0,1]$, then, for every $u>0$,
\begin{equation}\label{eq:hoeffding}
	\Pr\left[\sum_{j=1}^n X_j
	<\E\sum_{j=1}^n X_j-u\right]
	\le \exp\left(-\frac{2u^2}{n}\right).
\end{equation}

\subsection{Monotone families and forbidden $k$-graphs}
\label{sec:monotone-families}
In this subsection, we recall monotone families and their description
by forbidden subgraphs.
\begin{definition}
	A family $\cP$ of $k$-graphs is \emph{monotone} if it is closed under taking subgraphs.
\end{definition}

It follows immediately that arbitrary unions and intersections of
monotone families are again monotone.
We now give the definition of \emph{forbidden families}, which provide
the basic examples of monotone families.
\begin{definition}\label{def:forbidden-family}
	For a family $\F$ of $k$-graphs, let
	\[
	\Forb(\F)
	:=\{H:H\text{ contains no member of $\F$ as a subgraph}\}.
	\]
\end{definition}

The family $\Forb(\F)$ is monotone.  Indeed, if
$H\in\Forb(\F)$ and $H'$ is a subgraph of $H$, then any member of
$\F$ contained in $H'$ would also be contained in $H$.
Conversely, every monotone family
can be described by forbidding all $k$-graphs outside it.

\begin{definition}
\label{def:forbidden-famlily}
For a monotone family $\cP$ of $k$-graphs, define its
\emph{full forbidden family} by
\[
\F(\cP)
:=\{F:F\text{ is a $k$-graph and }F\notin\cP\}.
\]
\end{definition}
The definition immediately gives the following identity.
\begin{lemma}
\label{fact:family-equivalence}
For every monotone family $\cP$ of $k$-graphs, we have $\Forb(\F(\cP))=\cP$.
\end{lemma}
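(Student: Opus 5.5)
We prove the identity $\Forb(\F(\cP))=\cP$ by double inclusion, using only the definitions of $\Forb$ and $\F(\cP)$ together with the monotonicity of $\cP$.

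For the inclusion $\cP\subseteq\Forb(\F(\cP))$, let $H\in\cP$ and suppose, for contradiction, that $H$ contains some $F\in\F(\cP)$ as a subgraph. Since $\cP$ is monotone, it is closed under taking subgraphs, so $F\in\cP$. This contradicts $F\in\F(\cP)$, which means precisely $F\notin\cP$. Hence $H$ contains no member of $\F(\cP)$, that is, $H\in\Forb(\F(\cP))$. This is the only step where monotonicity is used.

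For the reverse inclusion $\Forb(\F(\cP))\subseteq\cP$, let $H\in\Forb(\F(\cP))$ and suppose $H\notin\cP$. Then $H\in\F(\cP)$ by definition. Every $k$-graph is a subgraph of itself, so $H$ contains a member of $\F(\cP)$, contradicting $H\in\Forb(\F(\cP))$. Therefore $H\in\cP$.

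There is essentially no obstacle. The one point to keep consistent with the paper's conventions is the edge case of $k$-graphs with no edges, since the introduction requires forbidden graphs to contain at least one edge. If $\cP$ is nonempty, monotonicity places every edgeless $k$-graph in $\cP$, so every member of $\F(\cP)$ has an edge. If $\cP=\varnothing$, both sides are empty: every $k$-graph lies in $\F(\cP)$ and contains itself, so $\Forb(\F(\cP))=\varnothing$ as well. In either case the argument above goes through verbatim, with the subgraph relation taken in the usual sense.
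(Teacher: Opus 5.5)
Your double-inclusion argument is correct and is exactly the paper's proof: monotonicity gives $\cP\subseteq\Forb(\F(\cP))$, and each $H\notin\cP$ lies in $\F(\cP)$ and contains itself. One aside in your edge-case paragraph is false: a nonempty monotone $\cP$ need not contain every edgeless $k$-graph, as the family of all $k$-graphs on at most five vertices shows, but your proof of the lemma never uses that claim.
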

\begin{proof}
If $H\in\cP$,
then every subgraph of $H$ belongs to $\cP$, and hence $H$ contains no
member of $\F(\cP)$.  Thus $H\in\Forb(\F(\cP))$.  On the other hand,
if $H\notin\cP$, then $H\in\F(\cP)$, so $H\notin\Forb(\F(\cP))$.
\end{proof}

Consequently, for a monotone family $\cP$ containing $k$-graphs of
arbitrarily large order, \cref{fact:family-equivalence} gives
\begin{equation}\label{eq:family-density}
	\gammakl(\F(\cP))
	=\lim_{n\to\infty}
	\frac{\max\{\delta_\ell(H):H\in\cP,\ v(H)=n\}}
	{\binom{n-\ell}{k-\ell}}.
\end{equation}
We will repeatedly use the following monotonicity properties.  If
$\F_1\subseteq\F_2$ are forbidden families, then $\gammakl(\F_1)\ge\gammakl(\F_2)$.
If $\cP\subseteq\cQ$ are such monotone families, then
$\gammakl(\F(\cP))\le\gammakl(\F(\cQ))$.
Also, for every collection $(\cP_i)_{i\in I}$ of monotone families, we have $\F\left(\bigcap_{i\in I}\cP_i\right)
=\bigcup_{i\in I}\F(\cP_i)$.

\subsection{Hypergraph Lagrangians and Tur\'an densities}
\label{sec:lagrangians}

We next recall the hypergraph Lagrangian and the results on
classical Tur\'an densities that will be used later.

\begin{definition}\label{def:lagrangian}
For a $k$-graph $G$, its \emph{normalized Lagrangian} is
\begin{equation}\label{eq:lagrangian}
 \lambda(G)
 :=\max\left\{
 k!\sum_{e\in E(G)}\prod_{v\in e}x_v:
 x_v\ge0\text{ for every }v\in V(G),
 \sum_{v\in V(G)}x_v=1
 \right\}.
\end{equation}
\end{definition}

The maximum is attained because the feasible simplex is compact. 
For any
feasible weight vector $\boldsymbol{x}=(x_v)_{v\in V(G)}$, the
quantity $k!\sum_{e\in E(G)}\prod_{v\in e}x_v$
is the probability that $k$ independent samples from $V(G)$ according
to $\boldsymbol{x}$ are distinct and form an edge of $G$.  Thus, $0\le\lambda(G)\le1$.

Let $\Lambda^{(k)}$ be the set of Lagrangians of $k$-graphs. The following two results on
classical Tur\'an densities are used in the proof for positive
densities. The cited papers allow the empty forbidden family and hence include
\(1\) in \(\Pi_\infty^{(k)}\). Since we consider only nonempty
forbidden families, we state their results in the following form.

\begin{theorem}[Pikhurko~\cite{Pikhurko2014}]\label{thm:closure}
For every integer $k\ge3$, $\overline{\Lambda^{(k)}}
=\Pi_\infty^{(k)}\cup\{1\}$.
\end{theorem}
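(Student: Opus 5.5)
The plan is to prove the two inclusions separately, using the standard link between Tur\'an densities and Lagrangians through blow-ups. For a $k$-graph $G$ and a weight vector $\boldsymbol x$, the balanced blow-up of $G$ on $n$ vertices with part sizes $\approx x_vn$ has $(k!\sum_{e}\prod_{v\in e}x_v+o(1))\binom nk$ edges. Hence the maximum edge density of blow-ups of $G$ tends to $\lambda(G)$.

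\emph{Step 1: $\Pi_\infty^{(k)}\subseteq\overline{\Lambda^{(k)}}$.} Fix a nonempty family $\F$ and call $G$ \emph{$\F$-hom-free} if every blow-up of $G$ is $\F$-free. The first consequence is that blow-ups of such $G$ give $\pi(\F)\ge\lambda(G)$. Conversely, let $H_n$ be an extremal $\F$-free graph. The uniform weighting gives $\lambda(H_n)\ge k!e(H_n)/n^k\to\pi(\F)$, but $H_n$ need not be $\F$-hom-free. I would first treat finite $\F$. There, supersaturation and the standard blow-up argument show that $\pi(\F)=\sup\{\lambda(G):G\ \F\text{-hom-free}\}$: a blow-up of $H_n$ containing a copy of $F\in\F$ forces positively many copies in a slightly smaller-density graph. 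For general $\F$, the density is the infimum of the densities of its finite subfamilies. So $\pi(\F)$ is a limit of suprema of Lagrangians, and hence lies in $\overline{\Lambda^{(k)}}$. The value $1$ lies in the closure via $\lambda(K_n^{(k)})\to1$.

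\emph{Step 2: $\overline{\Lambda^{(k)}}\subseteq\Pi_\infty^{(k)}\cup\{1\}$.} First, for any set $\mathcal S$ of $k$-graphs, let $\cP_{\mathcal S}$ be the monotone family of all subgraphs of blow-ups of members of $\mathcal S$. By \cref{fact:family-equivalence} and \eqref{eq:family-density}-type reasoning for $\ell=1$, we get $\pi(\F(\cP_{\mathcal S}))=\sup_{G\in\mathcal S}\lambda(G)$. This uses that a subgraph of a blow-up of $G$ has edge density at most $\lambda(G)+o(1)$, and that $\F(\cP_{\mathcal S})\neq\varnothing$ whenever the supremum is less than $1$. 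Thus every supremum of Lagrangians, in particular every $\lambda(G)$ and every limit of a nondecreasing sequence of Lagrangians, lies in $\Pi_\infty^{(k)}$.

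The remaining case, and the main obstacle, is a point $x<1$ of the closure that is approached only from above by Lagrangians $\lambda(G_j)\downarrow x$. Here one cannot simply intersect the families $\cP_{\{G_j\}}$, because the intersection of blow-up families can be much smaller than expected. I would follow Pikhurko's approach. The first idea to try is to show that the set of values $\sup_{G\in\mathcal S}\lambda(G)$ is already closed under decreasing limits. To do this, replace each $G_j$ by a $k$-graph $G'_j$ obtained by a recursive or iterated-pattern construction with $\lambda(G'_j)\le x$ and $\lambda(G'_j)\ge\lambda(G_j)-\eps_j$ along a subsequence. This would realise $x$ as an increasing limit, reducing to the previous paragraph. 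If that fails, the fallback is to build a hereditary family whose extremal constructions are nested recursive blow-ups, and to compute its density as the limit directly via a compactness argument.
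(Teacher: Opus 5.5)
The paper does not prove this theorem; it only cites it from Pikhurko. So there is no in-paper argument to compare with, and I am judging your proposal on its own.

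Step~1 and the first half of Step~2 are sound. For finite $\F$, the identity $\pi(\F)=\sup\{\lambda(G):G\text{ is }\F\text{-hom-free}\}$ is standard: pass to the finite family of homomorphic images and use supersaturation/blow-up invariance. You only sketch this, but the statement is right.

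The genuine gap is the case you flag yourself: a point $x<1$ approached only from above. You give no argument there. Moreover, your ``first idea'' cannot work as a construction. If $x\notin\Lambda^{(k)}$ and $x$ is not a limit from below, then $\Lambda^{(k)}\cap(x-\eps,x]=\varnothing$ for some $\eps>0$. So no $k$-graph $G'_j$, however it is built (recursively, via patterns, or otherwise), can satisfy $\lambda(G'_j)\le x$ and $\lambda(G'_j)\to x$. What is actually true is that this case never occurs, and the proof of that is a finite-approximation argument, not a construction. Your fallback is too vague to evaluate.

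The missing idea is to intersect a \emph{nested chain} of blow-up-closed families, rather than the families $\cP_{\{G_j\}}$, and then apply the finite-subfamily principle you already used in Step~1.
\begin{itemize}
\item For $u\in(x,1)$ let $\cP(u):=\{H:\lambda(H)<u\}$. This is your $\cP_{\mathcal S}$ with $\mathcal S=\{G:\lambda(G)<u\}$, so your Step~2 computation gives $\pi(\F(\cP(u)))=\sup(\Lambda^{(k)}\cap[0,u))$. Whenever Lagrangians approach $x$ from above, this value lies in $(x,u]$.
\item Take $u_i\downarrow x$ and set $\cQ:=\bigcap_i\cP(u_i)=\{H:\lambda(H)\le x\}$. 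Then $\F(\cQ)=\bigcup_i\F(\cP(u_i))$ is an increasing union.
\item Hence each finite $\F'\subseteq\F(\cQ)$ lies in some $\F(\cP(u_i))$, which gives $\pi(\F')\ge\sup(\Lambda^{(k)}\cap[0,u_i))>x$. By the finite-subfamily principle, $\pi(\F(\cQ))=\inf_{\F'}\pi(\F')\ge x$.
\item In the other direction, $\pi(\F(\cQ))\le\pi(\F(\cP(u_i)))\le u_i$ for every $i$.
\end{itemize}
So $\pi(\F(\cQ))=x$, and $\F(\cQ)\ne\varnothing$ because $x<1$. Comparing this with the direct bound $\pi(\F(\cQ))\le\sup(\Lambda^{(k)}\cap[0,x])$, valid for this blow-up-closed family, shows that the ``only from above'' case is in fact vacuous. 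This is the same nested-threshold mechanism the paper uses in the proof of \cref{lem:positive}.
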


\begin{theorem}[Liu-Pikhurko~\cite{LiuPikhurko2026}]\label{thm:interval-near-one}
For every integer $k\ge3$, there is a constant $\delta_k>0$ such that
\[[1-\delta_k,1)\subseteq\Pi_\infty^{(k)}.\]
\end{theorem}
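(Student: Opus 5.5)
By \cref{thm:closure}, $\Pi_\infty^{(k)}\cup\{1\}=\overline{\Lambda^{(k)}}$. It therefore suffices to show that the closure of the set of normalized Lagrangians contains a whole interval $[1-\delta_k,1)$. Since $\Lambda^{(k)}$ is countable, the task is a density statement: I want, for every $\eps>0$, an $\eps$-net of Lagrangian values in $[1-\delta_k,1)$. I would build it by interpolating between consecutive complete $k$-graphs, deleting one edge at a time.

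\textbf{Step 1 (the skeleton).} Let $t\to\infty$. We have $\lambda(K_t^{(k)})=t(t-1)\cdots(t-k+1)/t^k$, which increases to $1$. Set $\delta_k:=1-\lambda(K_{t_0}^{(k)})$ for a suitable fixed $t_0$ (large enough for Step~3 below). Then
\[
[1-\delta_k,1)=\bigcup_{t\ge t_0}\bigl[\lambda(K_t^{(k)}),\lambda(K_{t+1}^{(k)})\bigr],
\]
so it suffices to fill each interval $\bigl[\lambda(K_t^{(k)}),\lambda(K_{t+1}^{(k)})\bigr]$ with Lagrangian values whose gaps tend to $0$ as $t\to\infty$.

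\textbf{Step 2 (interpolation).} Fix a vertex $v$ of $K_{t+1}^{(k)}$ and delete the $\binom{t}{k-1}$ edges containing $v$ one at a time. This gives a chain
\[
K_{t+1}^{(k)}=G_0\supset G_1\supset\cdots\supset G_N,
\]
where $G_N$ is $K_t^{(k)}$ plus an isolated vertex, so $\lambda(G_N)=\lambda(K_t^{(k)})$. Lagrangians are monotone under taking subgraphs, so $\lambda(G_j)$ decreases from $\lambda(K_{t+1}^{(k)})$ to $\lambda(K_t^{(k)})$. The claim is that each step satisfies
\[
\lambda(G_{j})-\lambda(G_{j+1})\le C_k t^{-2}\quad\text{or even}\quad O_k(t^{-k}),
\]
and any bound that is $o(1)$ as $t\to\infty$ suffices. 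Granting this, every point of $[\lambda(K_t^{(k)}),\lambda(K_{t+1}^{(k)})]$ is within $o_t(1)$ of some $\lambda(G_j)$. Letting $t\to\infty$ shows that every $\alpha\in[1-\delta_k,1)$ is a limit of Lagrangians. Hence $\alpha\in\overline{\Lambda^{(k)}}\setminus\{1\}\subseteq\Pi_\infty^{(k)}$.

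\textbf{Step 3 (the step bound, which I expect to be the main obstacle).} Let $\boldsymbol x$ be an optimal weighting for $G_j$ and let $e$ be the edge deleted to form $G_{j+1}$. Evaluating the polynomial of $G_{j+1}$ at $\boldsymbol x$ gives
\[
\lambda(G_{j+1})\ge\lambda(G_j)-k!\prod_{u\in e}x_u.
\]
So it suffices to show that every vertex weight in an optimal weighting is $O_k(1/t)$. This requires an a priori bound on optimal weights, uniformly over all $G_j$, and I would argue it as follows.

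\begin{itemize}
\item The $t$ vertices other than $v$ are pairwise equivalent in $G_j$ up to the partial link of $v$. In particular, any two of them that are not both in the deleted part are clones.
\item By symmetrization and concavity along the segment between two clones, one may take an optimal $\boldsymbol x$ that is constant on the clone classes.
\item The KKT conditions $\partial_{x_u}=k\lambda(G_j)$ on the support then compare each weight with the others. If some $x_u\ge C/t$, the sum of the remaining weights drops, and the derivative at $u$ falls below the derivative at a lighter vertex. This contradicts optimality once $t\ge t_0$.
\end{itemize}

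If the clone-class bookkeeping becomes messy, I would fall back on a cruder route. Using $\lambda(G_j)\ge\lambda(K_t^{(k)})\ge1-\binom k2/t$, one shows that any weighting with a vertex of weight at least $C/t$ loses more than $\binom k2/t$ from the collision probability alone. Such a weighting therefore cannot be optimal, which again gives weights $O(1/t)$. Either way each step is $O_k(t^{-k})$, which completes the plan.
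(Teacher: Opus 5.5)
You correctly reduce, via \cref{thm:closure}, to showing $[1-\delta_k,1)\subseteq\overline{\Lambda^{(k)}}$. Your Step~3 weight bound is also not the problem: a Schur-concavity computation for $e_k$ on $t+1$ coordinates gives $\max_u x_u\le(2+o(1))/t$ for any optimal weighting of $G_j$.

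The gap is the limiting argument in Step~2. Fix $\alpha\in[1-\delta_k,1)$. The index $t$ with $\alpha\in[\lambda(K_t^{(k)}),\lambda(K_{t+1}^{(k)})]$ is then determined by $\alpha$, roughly $t\asymp 1/(1-\alpha)$. Letting $t\to\infty$ moves the interval towards $1$ and away from $\alpha$. For that fixed $t$ your chain has only $\binom{t}{k-1}+1$ values. So it shows only that $\alpha$ is within $O_k(t^{-k})=O_k((1-\alpha)^k)$ of some Lagrangian, not that $\alpha$ is a limit of Lagrangians. The set of values you produce meets each interval $[\lambda(K_t^{(k)}),\lambda(K_{t+1}^{(k)})]$ in a finite set. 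It is therefore discrete away from $1$, and its closure is far from an interval. In particular, you never obtain an $\eps$-net of $[1-\delta_k,1)$ once $\eps$ is below the largest gap of your chain at $t_0$.

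A sanity check shows the scheme itself cannot work: it never uses $k\ge3$. Take $k=2$. By Motzkin--Straus, the Lagrangian along your chain drops from $1-\frac1{t+1}$ to $1-\frac1t$ at the first deletion and then stays constant. So every step is at most $1/(t(t+1))=O(t^{-2})$, exactly as your Step~3 predicts. Yet $\overline{\Lambda^{(2)}}=\{1-1/r:r\in\mathbb N^+\}\cup\{1\}$ contains no interval.

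What is missing is a mechanism producing arbitrarily small Lagrangian increments at a \emph{fixed} level $\alpha<1$. That means $k$-graphs on unboundedly many vertices whose Lagrangians stay near $\alpha$ and can be tuned with vanishing step size. Blow-ups do not provide this, since they preserve the Lagrangian, and this is exactly where $k\ge3$ has to enter. Supplying such a mechanism is the real content of the Liu--Pikhurko theorem. The paper does not prove it; it imports it as an external result and uses it, together with \cref{thm:closure}, only to derive \cref{lem:one-sided}.
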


Combining these two theorems, we can obtain the following consequence.

\begin{lemma}\label{lem:one-sided}
For every integer \(k\ge3\), there exists \(b_k\in(0,1)\) such that,
for every \(\tau\in(b_k,1)\),
\[
\tau=
\sup\{\lambda(G):
G\text{ is a \(k\)-graph and }\lambda(G)<\tau\}.
\]
\end{lemma}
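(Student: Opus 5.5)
Set $b_k:=1-\delta_k$, where $\delta_k$ is the constant from \cref{thm:interval-near-one}, and fix $\tau\in(b_k,1)$. Every Lagrangian lies in $[0,1]$, so the supremum on the right is at most $\tau$. It is therefore enough to find, for each $\eps>0$, a $k$-graph $G$ with $\tau-\eps<\lambda(G)<\tau$. Put differently, we must show that $\tau$ is a limit from the left of points of $\Lambda^{(k)}$.

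For this I would use the two cited theorems together. Choose $\tau'$ with $\max\{\tau-\eps,\,b_k\}<\tau'<\tau$. Then $\tau'\in[1-\delta_k,1)$, so \cref{thm:interval-near-one} gives $\tau'\in\Pi_\infty^{(k)}$. By \cref{thm:closure}, $\tau'\in\overline{\Lambda^{(k)}}$. Hence there is a $k$-graph $G$ with $|\lambda(G)-\tau'|<\min\{\tau'-(\tau-\eps),\,\tau-\tau'\}$. This choice of radius keeps $\lambda(G)$ inside the open interval $(\tau-\eps,\tau)$, which is what we need.

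In short, every point of $(b_k,1)$ lies in the closure of $\Lambda^{(k)}$, since $(b_k,1)\subseteq\Pi^{(k)}_\infty$. Approximating points slightly to the left of $\tau$, rather than $\tau$ itself, gives the one-sided approximation from below. The only step needing care is this choice of radius, so that the approximating Lagrangian lands strictly below $\tau$ and not merely near it. There is no substantial obstacle beyond correctly invoking the two theorems.
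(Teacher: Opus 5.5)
Your proof is correct and follows the paper's argument essentially line by line: your $\tau'$ is the paper's $\beta$, and your radius is the paper's $\eta$. One cosmetic point is that $b_k:=1-\delta_k$ is not guaranteed by the statement of \cref{thm:interval-near-one} to lie in $(0,1)$, so, as the paper does, simply take any $b_k\in(0,1)$ with $[b_k,1)\subseteq\Pi_\infty^{(k)}$, for instance $b_k:=\max\{1-\delta_k,1/2\}$.
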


\begin{proof}
Choose $b_k\in(0,1)$ so that $[b_k,1)\subseteq\Pi_\infty^{(k)}$,
using \cref{thm:interval-near-one}.  Fix $\tau\in(b_k,1)$ and
$\eps>0$.  Choose
\[
 \max\{b_k,\tau-\eps\}<\beta<\tau,
\]
and then choose $\eta>0$ so small that
$(\beta-\eta,\beta+\eta)\subseteq(\tau-\eps,\tau)$.  Since
$\beta\in\Pi_\infty^{(k)}$, \cref{thm:closure} gives a finite
$k$-graph $G$ with $|\lambda(G)-\beta|<\eta$.  Thus
$\tau-\eps<\lambda(G)<\tau$, as required.
\end{proof}

\section{Cyclic multipartite families and their densities}\label{sec:construction}
Throughout this section, fix integers $k>\ell \ge 2$. 
Let $m,t$ be
integers satisfying
$m\ge k$ and $k-\ell\le t\le m-\ell$, and set
\[
	c_{m,t}:=\frac{(k-\ell)!\binom{t}{k-\ell}}{m^{k-\ell}}.
\]

The purpose of this section is to convert Lagrangian information about
auxiliary $\ell$-graphs into $\ell$-degree Tur\'an densities of
$k$-graphs.  In \cref{subsec:construction}, for each auxiliary
$\ell$-graph $G$ we define a monotone family $\cP(G;m,t)$ by a cyclic
$m$-partite construction.  In \cref{sec:fixed-auxiliary},
we show that the largest normalized
minimum $\ell$-degree among the $n$-vertex members of $\cP(G;m,t)$
converges to $c_{m,t}\lambda(G)$ for each fixed auxiliary $\ell$-graph $G$.  Finally, in
\cref{sec:auxiliary-hypergraphs}, we allow $G$ to range over an
arbitrary nonempty family and determine the resulting $\ell$-degree
Tur\'an density from the supremum of the corresponding Lagrangians.

All asymptotic notation in this section is with respect to
$n\to\infty$.  Unless stated otherwise, implicit constants may depend
on $k,\ell,m,t$, but are independent of the auxiliary graph $G$ and of
the choices made in the construction.

\subsection{The construction}\label{subsec:construction}
We now give the precise construction.  Its input is an auxiliary
$\ell$-graph $G$.  The vertices of the resulting $k$-graph are divided
into cyclically ordered parts, and $G$ controls the edges containing
$\ell$ vertices in one part.

\begin{definition}\label{def:cyclic-multipartite-construction}
	Let $G$ be an $\ell$-graph, let $V$ be a vertex set, and let
	$\V=(V_0,\ldots,V_{m-1})$ be an ordered partition of $V$ into $m$
	parts, where empty parts are allowed and indices are taken modulo $m$.
	For each $i\in\mathbb Z_m$, put
	$I_i:=\{i+1,\ldots,i+t\}$ and define
	\begin{equation}\label{eq:extensions}
		\T_i:=
		\left\{T\in\binom{V}{k-\ell}:
		T\subseteq\bigcup_{j\in I_i}V_j
		\text{ and }|T\cap V_j|\le1\text{ for every }j\in I_i\right\}.
	\end{equation}
	For each $i\in\mathbb Z_m$ and $T\in\T_i$, let
	$\phi_{i,T}:V_i\to V(G)$ be a map, and write
	$\Phi=(\phi_{i,T})_{i\in\mathbb Z_m,\,T\in\T_i}$.
	Define $H(G;\V,\Phi)$ to be the $k$-graph on $V$ in which
	$X\in\binom Vk$ is an edge if and only if one of the following holds:
	\begin{enumerate}[label=\textup{(\roman*)},leftmargin=*,itemsep=3pt]
		\item $|X\cap V_i|\le \ell-1$ for every $i\in\mathbb Z_m$;
		\item there exist $i\in\mathbb Z_m$, $S\in\binom{V_i}\ell$, and
		$T\in\T_i$ such that
		\[
		X=S\mathbin{\dot\cup}T
		\qquad\text{and}\qquad
		\{\phi_{i,T}(u):u\in S\}\in E(G).
		\]
	\end{enumerate}
\end{definition}

We call a $k$-graph of the form $H(G;\V,\Phi)$ a
\emph{cyclic multipartite $G$-construction}. Note that each member of $\T_i$ contains one vertex from
each of $k-\ell$ distinct parts among
$V_{i+1},\ldots,V_{i+t}$.
Since $\ell\ge2$, an edge of type~\textup{(ii)} contains exactly
$\ell$ vertices in a unique part $V_i$ and at most one vertex in every
other part. Thus the
two edge types are disjoint, and the index $i$ in
\textup{(ii)} is uniquely determined. Moreover,
$\{\phi_{i,T}(u):u\in S\}\in E(G)$ implies that
$\phi_{i,T}$ is injective on $S$.

The dependence of $\phi_{i,T}$ on $T$ is deliberate.  For a fixed pair
$(i,T)$, the $\ell$-graph on $V_i$ whose edges are the sets
$S\in\binom{V_i}\ell$ satisfying
$S \mathbin{\dot\cup} T\in E(H(G;\V,\Phi))$ is a possibly degenerate blow-up of $G$,
with vertices given by the preimages of $\phi_{i,T}$.  This allows
its number of edges to be controlled by $\lambda(G)$.  At the same
time, maps corresponding to different extension sets $T$ may be
chosen independently, which is the key feature used in the lower-bound
construction.

We now give the monotone family generated by $H(G;\V,\Phi)$.

\begin{definition}\label{def:construction-family}
	For an $\ell$-graph $G$, let $\cP(G;m,t)$ be the family of all
	$k$-graphs $H$ for which there exist an ordered $m$-partition
	$\V$ of $V(H)$ and maps $\Phi$ as in
	\cref{def:cyclic-multipartite-construction} such that
	\[
	E(H)\subseteq E\bigl(H(G;\V,\Phi)\bigr).
	\]
\end{definition}

Thus $\cP(G;m,t)$ consists of all $k$-graphs that occur as subgraphs
of cyclic multipartite $G$-constructions.  It follows directly from
the definition that $\cP(G;m,t)$ is monotone.  Indeed, deleting edges
does not change the witnessing partition and maps, while after deleting
vertices we restrict the parts and the corresponding maps.

Moreover, every part of the witnessing partition is independent.  Indeed, an edge of type~\textup{(i)} meets each part in at most $\ell-1$ vertices,
while an edge of type~\textup{(ii)} meets its distinguished part in
exactly $\ell$ vertices and every other part in at most one vertex.
Consequently, every clique in a member of $\cP(G;m,t)$ contains at most
$k-1$ vertices from each part.  Hence every member of
$\cP(G;m,t)$ is $K_{m(k-1)+1}^{(k)}$-free, and therefore $\F\bigl(\cP(G;m,t)\bigr)\ne\varnothing$.

\subsection{The density for a fixed auxiliary graph}
\label{sec:fixed-auxiliary}

We next determine the density of $\cP(G;m,t)$ for a fixed auxiliary
$\ell$-graph $G$.  We first need the following averaging lemma for
cyclically consecutive parts. For an
ordered partition $\V=(V_0,\ldots,V_{m-1})$, the definition of $\T_i$
gives
\[
 |\T_i|
 =\sum_{A\in\binom{[t]}{k-\ell}}
   \prod_{j\in A}|V_{i+j}|.
\]
The following weighted form will allow us to find a nonempty part for
which $|\T_i|$ is no larger than its value in the balanced case.

\begin{lemma}\label{lem:cyclic}
Let $y_0,\ldots,y_{m-1}$ be nonnegative real numbers with
$n:=\sum_{j=0}^{m-1}y_j>0$.  Then there exists $i\in\mathbb Z_m$ with
$y_i>0$ such that
\[
 \sum_{A\in\binom{[t]}{k-\ell}}
 \prod_{j\in A}y_{i+j}
 \le \binom{t}{k-\ell}\left(\frac{n}{m}\right)^{k-\ell}.
\]
\end{lemma}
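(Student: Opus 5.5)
Write $s:=k-\ell$ and $f(i):=\sum_{A\in\binom{[t]}{s}}\prod_{j\in A}y_{i+j}$. We want a weighted averaging argument in which the weight on index $i$ is $y_i$ itself. Any $i$ with $y_i=0$ then contributes nothing, so a weighted-average bound automatically yields an index with $y_i>0$.

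The key inequality to prove is
\[
\sum_{i\in\mathbb Z_m} y_i\, f(i)\;\le\; n\binom{t}{s}\left(\frac nm\right)^{s}.
\]
Once this holds, dividing by $n=\sum_i y_i>0$ shows that the $y$-weighted average of $f$ is at most $\binom ts (n/m)^s$. Some $i$ with $y_i>0$ must then satisfy $f(i)\le$ this average, which is the lemma.

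To prove the inequality, expand the left side:
\[
\sum_i y_i f(i)=\sum_{A\in\binom{[t]}{s}}\sum_i y_i\prod_{j\in A}y_{i+j}.
\]
Fix $A$ and set $B:=\{0\}\cup A\subseteq\{0,1,\dots,t\}$, a set of $s+1$ distinct residues. The condition $t\le m-\ell\le m-1$ ensures these residues are distinct mod $m$. The inner sum is then $\sum_i\prod_{b\in B}y_{i+b}$, a cyclic sum of a product of $s+1$ distinct shifted coordinates.

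Next apply AM--GM:
\[
\prod_{b\in B}y_{i+b}\le \frac{1}{s+1}\sum_{b\in B}y_{i+b}^{s+1}.
\]
Summing over $i$ gives, by cyclic invariance, $\sum_i\prod_{b\in B}y_{i+b}\le\sum_j y_j^{s+1}$. This bound is too weak, since it is not $n^{s+1}/m^s$. So AM--GM must instead be used in a form that produces $(\sum y)^{s+1}/m^s$.

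Here is the correct tool. By the generalized Hölder inequality with exponents all equal to $s+1$, we get $\sum_i\prod_{b\in B}y_{i+b}\le\prod_{b}(\sum_i y_{i+b}^{s+1})^{1/(s+1)}=\sum_j y_j^{s+1}$, which is again the wrong direction. Products of distinct shifts are in fact \emph{maximized} in the balanced case only under a fixed sum, not under a power-sum constraint. I would therefore use Maclaurin/Muirhead-type reasoning: for fixed $\sum y_j=n$, the function $P(y)=\sum_i\prod_{b\in B}y_{i+b}$ is a sum of $m$ distinct squarefree monomials of degree $s+1$. Each such monomial appears in $e_{s+1}(y)$, and there are at most $m$ of them. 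This still does not directly give $m(n/m)^{s+1}$: for $m$ large, $e_{s+1}$ is much bigger.

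This is the main obstacle, and I expect the hardest step to be bounding $P(y)\le m(n/m)^{s+1}=n^{s+1}/m^s$. The plan is an induction or smoothing argument. Alternatively, for a cleaner route, apply AM--GM to the $m$ cyclic products themselves with a fractional covering: every $y_j$ appears in exactly $s+1$ of the $m$ monomials, so averaging over $i$ suggests comparing $\frac1m P$ with $\big(\frac1m\sum_i\prod_b y_{i+b}\big)$. If this inequality fails in general (for example when $B$ is concentrated and $m$ is large, with $y$ supported on $t+1$ consecutive parts), I would fall back to a direct argument that avoids weighting.

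The fallback is a minimal-counterexample approach. Suppose $f(i)>\binom ts(n/m)^s$ for every $i$ with $y_i>0$. Sum these inequalities over the support with weights $y_i$, and compare the result with the identity $\sum_i y_if(i)=\sum_{A}P_A(y)$. I would then check the bound $P_A(y)\le n^{s+1}/m^s$ first in the case where $y$ is supported on a short interval, since that is where I expect any failure to appear.
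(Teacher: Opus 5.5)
Your central inequality $\sum_{i} y_i f(i)\le n\binom ts (n/m)^s$ is false, so the plan breaks at its key step. Take $s=t=1$, $m=10$, $y_0=y_1=1$ and all other $y_j=0$, so $n=2$. Then $\sum_i y_i f(i)=y_0y_1=1$, while the right-hand side is $2\cdot 2/10=2/5$. The lemma itself holds here: take $i=1$, where $f(1)=y_2=0$.

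The failure is structural. Whenever the mass sits on consecutive parts, weighting by $y_i$ puts the weight exactly where $f$ is large, so no $y$-weighted average can work. More generally, with $t=s$ and $y$ spread evenly over $s+1$ consecutive indices, the left side equals $(n/(s+1))^{s+1}$. This is far above $n^{s+1}/m^s$ for large $m$, so these cyclic products are \emph{not} maximized in the balanced case.

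Your fallback does not rescue this. It sums the same inequalities with the same weights $y_i$, so it needs exactly the same false bound on $\sum_A P_A(y)$. Dropping the weights is not enough either: for $s\ge 2$ even the unweighted average of $f$ over $\mathbb Z_m$ can exceed $\binom ts(n/m)^s$. For example, with $t=s=2$ and $y_1=y_2=n/2$, that average is $n^2/(4m)$, which exceeds $(n/m)^2$ once $m>4$.

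The missing idea is to average a \emph{linear} quantity without weights and then move to a positive index. Put $W_j:=\sum_{h=1}^t y_{j+h}$. Then $\sum_{j\in\mathbb Z_m}W_j=tn$, so some $j$ has $W_j\le tn/m$. If $y_j=0$, let $i$ be the nearest index behind $j$ in the cyclic order with $y_i>0$. The window of $i$ begins with the zeros $y_{i+1},\dots,y_j$, and its remaining entries lie in the window of $j$, so $W_i\le W_j$.

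Maclaurin's inequality, applied to the elementary symmetric polynomial $e_s$, then gives $f(i)=e_s(y_{i+1},\dots,y_{i+t})\le\binom ts(W_i/t)^s\le\binom ts(n/m)^s$. This is the paper's argument. Passing through the linear window sums via Maclaurin is what makes the cyclic averaging exact; the degree-$s$ quantity $f$ cannot be averaged directly.
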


\begin{proof}
For $j\in\mathbb Z_m$, let
\[
 W_j:=\sum_{h=1}^t y_{j+h}.
\]
Since $\sum_{j\in\mathbb Z_m}W_j=tn$, some $j\in\mathbb Z_m$ satisfies
$W_j\le tn/m$.  If $y_j>0$, take $i=j$.  Otherwise, let $i$ be the first index of positive weight encountered
when moving backward from $j$ in the cyclic order, and let $d$ be the
forward distance from $i$ to $j$.  Then
$y_{i+1}=\cdots=y_{i+d}=0$.  If $d\ge t$, then $W_i=0$.  If $d<t$,
the first $d$ summands in $W_i$ vanish and every remaining summand also
occurs in $W_j$.  Thus, in either case,
\[
 y_i>0
 \qquad\text{and}\qquad
 W_i\le W_j\le \frac{tn}{m}.
\]
Maclaurin's inequality applied to
$y_{i+1},\ldots,y_{i+t}$ now gives
\[
 \sum_{A\in\binom{[t]}{k-\ell}}
 \prod_{j\in A}y_{i+j}
 \le \binom{t}{k-\ell}
 \left(\frac{W_i}{t}\right)^{k-\ell}
 \le \binom{t}{k-\ell}
 \left(\frac{n}{m}\right)^{k-\ell}.
\]
\end{proof}

We can now prove the matching degree estimates. 

\begin{theorem}\label{thm:fixed-auxiliary}
Let $k>\ell\ge2$, and $m,t$ be integers satisfying $m\ge k$, $k-\ell\le t\le m-\ell$.
There exist a constant $C>0$ and a positive integer $n_0$, depending
only on $k,\ell,m,t$, such that, for every $\ell$-graph $G$ and every
$n\ge n_0$, the following hold:
\begin{enumerate}[label=\textup{(\roman*)},leftmargin=*,itemsep=4pt]
 \item Every $n$-vertex $k$-graph $H\in\cP(G;m,t)$ satisfies
 \begin{equation}\label{eq:fixed-auxiliary-upper}
 	\frac{\delta_\ell(H)}{\binom{n-\ell}{k-\ell}}
 \le c_{m,t}\lambda(G)+Cn^{-1/2}.
 \end{equation}
 \item There exists an $n$-vertex $k$-graph $H\in\cP(G;m,t)$ satisfying 
 \begin{equation}
 	\label{eq:fixed-auxiliary-lower} 
 	\frac{\delta_\ell(H)}{\binom{n-\ell}{k-\ell}}
 \ge c_{m,t}\lambda(G)
 -C\left(n^{-1}+n^{-(k-\ell)/2}\sqrt{\log n}\right).
 \end{equation}
\end{enumerate}
Consequently,
\begin{equation}\label{eq:fixed-auxiliary-density}
 \gammakl\bigl(\F(\cP(G;m,t))\bigr)=c_{m,t}\lambda(G).
\end{equation}
\end{theorem}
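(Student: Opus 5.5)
For the upper bound (i), I will fix $H\in\cP(G;m,t)$ on $n$ vertices with a witnessing partition $\V$ and maps $\Phi$. Apply \cref{lem:cyclic} to $y_j=|V_j|$ to get a nonempty part $V_i$ with $|\T_i|\le\binom{t}{k-\ell}(n/m)^{k-\ell}$.

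If $|V_i|<\ell$, no $\ell$-set lies inside $V_i$, so I need a different handle on small parts. In that case I will pick an $\ell$-set $S$ containing all of $V_i$ together with further vertices. The plan is then to count, for each $\ell$-set $S\subseteq V_i$, the sets $T$ with $S\cup T\in E(H)$. Since type (i) edges meet $V_i$ in at most $\ell-1$ vertices, every such $T$ comes from type (ii) with distinguished part $i$, so $T\in\T_i$ and $\phi_{i,T}(S)\in E(G)$. Hence
\[\sum_{S\in\binom{V_i}{\ell}}d_H(S)\le\sum_{T\in\T_i}\bigl|\{S:\phi_{i,T}(S)\in E(G)\}\bigr|.\]
For fixed $T$, the inner count is the number of edges of a blow-up of $G$ with part sizes $|\phi_{i,T}^{-1}(v)|$. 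Taking weights $x_v=|\phi_{i,T}^{-1}(v)|/|V_i|$, this count is at most $\lambda(G)|V_i|^\ell/\ell!$. Averaging over $S$ gives
\[\delta_\ell(H)\le\lambda(G)\frac{|V_i|^\ell}{\ell!\binom{|V_i|}{\ell}}|\T_i|.\]
Combining this with the bound on $|\T_i|$ and $\binom{n-\ell}{k-\ell}\sim n^{k-\ell}/(k-\ell)!$ yields $c_{m,t}\lambda(G)$ up to a factor $1+O(1/|V_i|)$.

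This error is only good when $|V_i|$ is large, which I expect to be the main obstacle. I will handle it by a case split at the threshold $|V_i|\ge\sqrt n$.

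If $|V_i|\ge\sqrt n$, the error is $O(n^{-1/2})$.

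If $0<|V_i|<\sqrt n$, I will choose $S$ containing a vertex $v\in V_i$ and $\ell-1$ further vertices. I then need to show that every edge through $v$ uses a $(k-1)$-set structured so that the total count is small. I will try first a different approach: I expect the argument must bound degrees of $\ell$-sets meeting tiny parts. Concretely, I will pick $S$ with two vertices in $V_i$ when $|V_i|\ge2$, or else argue via a neighbouring part. If this fails, I will replace \cref{lem:cyclic} by a version applied only to parts of size at least $\sqrt n$: parts of size below $\sqrt n$ contribute only $O(m\sqrt n)$ vertices, and this perturbs $W_j$ by $O(\sqrt n)$, which gives the $Cn^{-1/2}$ term.

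For the lower bound (ii), I will take an equipartition $|V_j|\in\{\lfloor n/m\rfloor,\lceil n/m\rceil\}$ and an optimal weight vector $x$ for $\lambda(G)$. For each $(i,T)$, I choose $\phi_{i,T}$ independently at random with $\Pr[\phi_{i,T}(u)=v]=x_v$, independently over $u$.

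First I check the $\ell$-sets $S$ inside a single part $V_i$. Here $d_H(S)=\sum_{T\in\T_i}X_T$, where $X_T$ is the indicator that $\phi_{i,T}(S)\in E(G)$. These indicators are independent with mean $\ell!\sum_{e\in E(G)}\prod_{v\in e}x_v=\lambda(G)$. Hoeffding \eqref{eq:hoeffding} with $u=\sqrt{|\T_i|\,(k+1)\log n}$ and a union bound over the $\le n^\ell$ sets $S$ give, with positive probability, $d_H(S)\ge\lambda(G)|\T_i|-O(n^{(k-\ell)/2}\sqrt{\log n})$ for all such $S$. Moreover $|\T_i|=\binom{t}{k-\ell}(n/m)^{k-\ell}(1+O(1/n))$.

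Next I check the $\ell$-sets $S$ meeting at least two parts. For these, type (i) edges alone give all $T$ for which $S\cup T$ meets each part in at most $\ell-1$ vertices. That is a $1-O(1/n)$ fraction of $\binom{n-\ell}{k-\ell}$ when $m\ge k$, which exceeds $c_{m,t}$. This needs $c_{m,t}\le1$, which holds since $t\le m$.

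Finally, \eqref{eq:fixed-auxiliary-density} follows from (i), (ii) and \eqref{eq:family-density}.
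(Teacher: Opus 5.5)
The step that fails is the lower bound for $\ell$-sets $S$ meeting at least two parts. You assert that the type~(i) extensions of such an $S$ form a $1-O(1/n)$ fraction of $\binom{n-\ell}{k-\ell}$, so that you only need $c_{m,t}\le 1$. This is false. If $S$ has $\ell-1$ vertices in a part $V_j$, no type~(i) edge through $S$ can use a further vertex of $V_j$, so for an equipartition the fraction is at most about $(1-1/m)^{k-\ell}$.

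For instance, take $\ell=2$ and let $S$ have one vertex in each of $V_0,V_1$. The admissible $T$ must avoid $V_0\cup V_1$ and meet every other part at most once. This gives only about $\binom{m-2}{k-2}(n/m)^{k-2}$ extensions, i.e.\ a normalized value of about $c_{m,m-2}$, which is at least $c_{m,t}$ exactly when $t\le m-2$. So this is precisely where the hypothesis $t\le m-\ell$ has to enter, and your argument never uses it.

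The repair is the paper's count. Let $J$ be the set of (at most $\ell$) parts met by $S$. Choosing $k-\ell$ parts outside $J$ and one vertex from each yields type~(i) edges, so
\[
d_H(S)\ge\binom{m-|J|}{k-\ell}\Bigl(\frac nm-1\Bigr)^{k-\ell}\ge\binom{t}{k-\ell}\Bigl(\frac nm\Bigr)^{k-\ell}-O(n^{k-\ell-1}),
\]
because $m-|J|\ge m-\ell\ge t$. Together with $\lambda(G)\le1$, this dominates your within-part bound. The within-part Hoeffding step itself matches the paper and is fine.

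In the upper bound, your detours for a tiny part $V_i$ cannot work. These are: taking $S\supseteq V_i$ plus outside vertices, taking two vertices of $V_i$, or arguing via a neighbouring part. Such an $S$ meets several parts. Whenever the other parts have linear size, it already has $\Theta(n^{k-\ell})$ type~(i) extensions, which is useless when $\lambda(G)$ is small (e.g.\ $G$ edgeless). Moreover, for $\ell=2$ and $|V_i|=2$ the factor $|V_i|^\ell/(\ell!\binom{|V_i|}{\ell})$ equals $2$, not $1+o(1)$.

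Your fallback is the correct argument and is exactly the paper's: apply \cref{lem:cyclic} to $y_j=|V_j|$ if $|V_j|\ge\sqrt n$ and $y_j=0$ otherwise. This directly gives an $i$ with $|V_i|\ge\sqrt n$. The members of $\T_i$ meeting a part of size below $\sqrt n$ number at most $m\sqrt n\binom{n-1}{k-\ell-1}=O(n^{k-\ell-1/2})$; equivalently, the window sum increases by less than $t\sqrt n$ before you apply Maclaurin. So no case analysis on small parts is needed, and this should be your main line rather than a contingency.
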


\begin{proof}
\emph{Upper bound.}
Let $H\in\cP(G;m,t)$ have $n$ vertices.  By
\cref{def:construction-family}, $H$ is a spanning subgraph of some
$H(G;\V,\Phi)$.  Adding edges cannot decrease the minimum
$\ell$-degree, so it is enough to consider the case
$H=H(G;\V,\Phi)$.

Put $n_j:=|V_j|$ and
define
\[
y_j:=
\begin{cases}
	n_j,& n_j\ge\sqrt n,\\
	0,& n_j<\sqrt n.
\end{cases}
\]
If every $y_j$ were zero, then $n<m\sqrt n$, which is impossible for
$n>m^2$. Thus, for all sufficiently large $n$, at least one $y_j$ is
positive. Let $N:=\sum_{j=0}^{m-1}y_j\le n$. Applying
\cref{lem:cyclic} gives an index $i$ with $n_i=y_i\ge\sqrt n$ and
\begin{equation}\label{eq:large-extensions-upper}
	\sum_{A\in\binom{[t]}{k-\ell}}\prod_{j\in A}y_{i+j}
	\le\binom{t}{k-\ell}\left(\frac Nm\right)^{k-\ell}
	\le\binom{t}{k-\ell}\left(\frac nm\right)^{k-\ell}.
\end{equation}

The sum on the left counts exactly those members of $\T_i$ that avoid
all parts of size less than $\sqrt n$. The union of all smaller parts
contains fewer than $m\sqrt n$ vertices. Every member of $\T_i$ not
counted in \eqref{eq:large-extensions-upper} contains at least one such vertex.
Choosing one such vertex first and then choosing the
other $k-\ell-1$ vertices, while ignoring the restrictions in the
definition of $\T_i$, gives
\[
\bigl|\{T\in\T_i:T\text{ meets a part of size less than }\sqrt n\}\bigr|
\le m\sqrt n\binom{n-1}{k-\ell-1}
=O(n^{k-\ell-1/2}).
\]
Consequently,
\begin{equation}\label{eq:number-extensions-upper}
	|\T_i|
	\le\binom{t}{k-\ell}\left(\frac nm\right)^{k-\ell}
	+O(n^{k-\ell-1/2}).
\end{equation}

Fix $T\in\T_i$.  Since the preimages of $\phi_{i,T}$ partition $V_i$,
the vector $(|\phi_{i,T}^{-1}(c)|/n_i)_{c\in V(G)}$
is feasible in the definition of the normalized Lagrangian of $G$.
It follows that
\[
	\bigl|\{S\in\tbinom{V_i}{\ell}:
	S\mathbin{\dot\cup}T\in E(H)\}\bigr| =\sum_{e\in E(G)}
	\prod_{c\in e}|\phi_{i,T}^{-1}(c)| =n_i^\ell\sum_{e\in E(G)}
	\prod_{c\in e}\frac{|\phi_{i,T}^{-1}(c)|}{n_i} \le \frac{\lambda(G)}{\ell!}n_i^\ell.
\]
No edge of type~\textup{(i)} contains an $\ell$-set lying in $V_i$.
Moreover, a type~\textup{(ii)} edge has a unique part containing
$\ell$ vertices.  Therefore, double-counting the pairs $(S,T)$ with
$S\in\binom{V_i}{\ell}$, $T\in\T_i$, and
$S\mathbin{\dot\cup}T\in E(H)$ yields
\[
\begin{aligned}
	\delta_\ell(H) \le \frac{1}{\binom{n_i}{\ell}}
	\sum_{S\in\binom{V_i}{\ell}}d_H(S) =\frac{1}{\binom{n_i}{\ell}}
	\sum_{T\in\T_i}
	\bigl|\{S\in\tbinom{V_i}{\ell}:
	S\mathbin{\dot\cup}T\in E(H)\}\bigr| \le \lambda(G)
	\frac{n_i^\ell}{\ell!\binom{n_i}{\ell}}|\T_i|.
\end{aligned}
\]
Since $n_i\ge\sqrt n$,
\[
\frac{n_i^\ell}{\ell!\binom{n_i}{\ell}}
=\prod_{h=0}^{\ell-1}\left(1-\frac{h}{n_i}\right)^{-1}
=1+O\bigl(n^{-1/2}\bigr).
\]
Together with \eqref{eq:number-extensions-upper} and
$0\le\lambda(G)\le1$, this gives
\[
\delta_\ell(H)
\le \lambda(G)\binom{t}{k-\ell}
\left(\frac{n}{m}\right)^{k-\ell}
+O\bigl(n^{k-\ell-1/2}\bigr).
\]
Moreover,
\[
\binom{n-\ell}{k-\ell}
=
\frac{n^{k-\ell}}{(k-\ell)!}
\bigl(1+O(n^{-1})\bigr).
\]
Hence
\[
	\frac{\delta_\ell(H)}{\binom{n-\ell}{k-\ell}} \le
	\frac{(k-\ell)!\binom{t}{k-\ell}}{m^{k-\ell}}
	\lambda(G)+O(n^{-1/2}) =c_{m,t}\lambda(G)+O(n^{-1/2}).
\]
Since the implicit constant depends only on $k,\ell,m,t$, this proves
\eqref{eq:fixed-auxiliary-upper} with a constant independent of $G$.

\medskip
\noindent\emph{Lower bound.}
Choose an ordered partition
$\V=(V_0,\ldots,V_{m-1})$ satisfying
\[
\left||V_i|-\frac{n}{m}\right|\le1
\qquad\text{for every }i\in\mathbb Z_m.
\] Let
$\boldsymbol{x}=(x_v)_{v\in V(G)}$ be a weight vector attaining
$\lambda(G)$.  Independently for every
$i\in\mathbb Z_m$, $T\in\T_i$, and $u\in V_i$, choose the image
$\phi_{i,T}(u)$ from $V(G)$ according to the distribution
$\boldsymbol{x}$, and let $H:=H(G;\V,\Phi)$.

For every $i\in\mathbb Z_m$, the balanced partition gives
\[ n^{k-\ell} \ge |\T_i| \ge
	\binom{t}{k-\ell}
	\left(\frac{n}{m}-1\right)^{k-\ell} \ge
	\binom{t}{k-\ell}
	\left(\frac{n}{m}\right)^{k-\ell} - O\bigl(n^{k-\ell-1}\bigr).
\]

First, consider an $\ell$-set $S$ lying in one part, i.e. $S\in\binom{V_i}{\ell}$ for some $i\in\mathbb Z_m$.  For each
$T\in\T_i$, the probability that
$\{\phi_{i,T}(u):u\in S\}$ is an edge of $G$ is $\lambda(G)$.
The corresponding events are independent for distinct $T$, because
the random choices defining $\phi_{i,T}$ are indexed by $T$.  Hence
\[
 d_H(S)\sim\operatorname{Bin}(|\T_i|,\lambda(G)).
\]
Thus, Hoeffding's inequality
\eqref{eq:hoeffding} gives
\[
 \Pr\bigl[d_H(S)<\lambda(G)|\T_i|- n^{(k-\ell)/2}\sqrt{(\ell+1) \log n } \bigr]
 \le \exp\left(-\frac{2n^{k-\ell}(\ell+1)\log n}{|\T_i|}\right)
 \le n^{-2\ell-2}.
\]
There are at most $n^\ell$ choices for an $\ell$-set contained in one
part.  A union bound therefore shows that, with positive probability,
every such set $S$ satisfies
\begin{equation}\label{eq:within-part-lower}
 d_H(S)
 \ge \lambda(G)\binom{t}{k-\ell}
       \left(\frac{n}{m}\right)^{k-\ell}
    -O\left(n^{k-\ell-1}
       +n^{(k-\ell)/2}\sqrt{\log n}\right).
\end{equation}

It remains to consider an $\ell$-set $S$ meeting at least two parts.  Let
$J:=\{j\in\mathbb Z_m:S\cap V_j\ne\varnothing\}$, so $2 \le |J|\le\ell$.
Every choice of $k-\ell$ parts outside $J$, followed by one vertex from
each chosen part, produces a set $T$ for which
$S\mathbin{\dot\cup}T$ is an edge of type~\textup{(i)}.  
Since every part has size at least $n/m-1$, we obtain
\[
	d_H(S) \ge
	\binom{m-|J|}{k-\ell}
	\left(\frac{n}{m}-1\right)^{k-\ell} \ge
	\binom{t}{k-\ell}
	\left(\frac{n}{m}\right)^{k-\ell}
	- O(n^{k-\ell-1})
\]
As
$\lambda(G)\le1$, this bound is at least the right-hand side of
\eqref{eq:within-part-lower}, after adjusting the implicit constant.

Choose a hypergraph for which \eqref{eq:within-part-lower} holds for all
$\ell$-sets contained in one part.  The preceding two estimates then
give
\[
 \delta_\ell(H)
 \ge \lambda(G)\binom{t}{k-\ell}
       \left(\frac{n}{m}\right)^{k-\ell}
    -O\left(n^{k-\ell-1}
       +n^{(k-\ell)/2}\sqrt{\log n}\right).
\]
Dividing by $\binom{n-\ell}{k-\ell}$ yields
\[
 \frac{\delta_\ell(H)}{\binom{n-\ell}{k-\ell}}
 \ge c_{m,t}\lambda(G)
    -O\left(n^{-1}
       +n^{-(k-\ell)/2}\sqrt{\log n}\right),
\]
which proves ~\eqref{eq:fixed-auxiliary-lower} after increasing $C$ and
$n_0$ if necessary.

Taking the maximum over all $n$-vertex members of
$\cP(G;m,t)$ in \eqref{eq:fixed-auxiliary-upper}, using the member
provided by \eqref{eq:fixed-auxiliary-lower}, and then applying
\eqref{eq:family-density}, we obtain
\eqref{eq:fixed-auxiliary-density}.
\end{proof}

\subsection{\texorpdfstring{Families of auxiliary $\ell$-graphs}%
{Families of auxiliary ell-graphs}}
\label{sec:auxiliary-hypergraphs}
The preceding subsection determines the density of $\cP(G;m,t)$ when
$G$ is fixed.  We now allow the auxiliary graph to range over a
family.  This requires some care: an $n$-vertex member of the resulting
union may be witnessed by an auxiliary graph that depends on both the
member and $n$, so the fixed-$G$ density formula alone does not give an
upper bound for the union.

\begin{definition}\label{def:auxiliary-family}
For a nonempty family $\G$ of $\ell$-graphs, define
\[
 \cP(\G;m,t):=\bigcup_{G\in\G}\cP(G;m,t).
\]
\end{definition}

The family $\cP(\G;m,t)$ is monotone, being a union of monotone
families.  Moreover, every
member of $\cP(\G;m,t)$ is $K_{m(k-1)+1}^{(k)}$-free.  Consequently, $\F\bigl(\cP(\G;m,t)\bigr)\ne\varnothing$.

The next lemma shows that the density of $\cP(\G;m,t)$ is determined by
the supremum of the Lagrangians of the auxiliary graphs.

\begin{lemma}\label{lem:supremum}
	Let $k>\ell\ge2$, and let $m,t$ be integers satisfying
	$m\ge k$ and $k-\ell\le t\le m-\ell$.  Then, for every nonempty family
	$\G$ of $\ell$-graphs,
	\[
	\gammakl\bigl(\F(\cP(\G;m,t))\bigr)
	=c_{m,t}\sup_{G\in\G}\lambda(G).
	\]
\end{lemma}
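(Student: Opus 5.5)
Set $\sigma:=\sup_{G\in\G}\lambda(G)\in[0,1]$. The plan is to prove the two inequalities separately, using \eqref{eq:family-density} for the monotone family $\cP(\G;m,t)$. This family contains $k$-graphs of every order, since each $\cP(G;m,t)$ does.

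For the lower bound, fix any $G\in\G$. Since $\cP(G;m,t)\subseteq\cP(\G;m,t)$, the monotonicity remark in \cref{sec:monotone-families} together with \eqref{eq:fixed-auxiliary-density} gives
\[
\gammakl\bigl(\F(\cP(\G;m,t))\bigr)\ge\gammakl\bigl(\F(\cP(G;m,t))\bigr)=c_{m,t}\lambda(G).
\]
Taking the supremum over $G\in\G$ yields $\gammakl\bigl(\F(\cP(\G;m,t))\bigr)\ge c_{m,t}\sigma$.

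For the upper bound we must deal with the issue flagged before \cref{def:auxiliary-family}: an $n$-vertex member $H\in\cP(\G;m,t)$ lies in some $\cP(G_H;m,t)$, but the witness $G_H\in\G$ may depend on $H$ and $n$. The key point is that part~(i) of \cref{thm:fixed-auxiliary} is uniform in the auxiliary graph. The constant $C$ and threshold $n_0$ depend only on $k,\ell,m,t$. Hence for every $n\ge n_0$ and every $n$-vertex $H\in\cP(\G;m,t)$ we get
\[
\frac{\delta_\ell(H)}{\binom{n-\ell}{k-\ell}}\le c_{m,t}\lambda(G_H)+Cn^{-1/2}\le c_{m,t}\sigma+Cn^{-1/2}.
\]
Maximizing over such $H$ and letting $n\to\infty$ in \eqref{eq:family-density} gives $\gammakl\bigl(\F(\cP(\G;m,t))\bigr)\le c_{m,t}\sigma$. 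The limit exists by the result of Lo and Markstr\"om, and $\F(\cP(\G;m,t))$ is nonempty as noted above.

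The only delicate step is this uniformity, and it was already built into \cref{thm:fixed-auxiliary} through the statement that $C,n_0$ are independent of $G$. So no new estimate is needed, and the argument reduces to combining the two inequalities.
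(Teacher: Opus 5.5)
Your proof is correct and follows the paper's argument: the upper bound rests on the fact that $C$ and $n_0$ in part~(i) of \cref{thm:fixed-auxiliary} do not depend on $G$, exactly as in the paper. Your lower bound combines monotonicity of the density with \eqref{eq:fixed-auxiliary-density} and then takes the supremum over $G$; this is just a repackaging of the paper's use of part~(ii) with a $G\in\G$ satisfying $\lambda(G)>\sup_{G'\in\G}\lambda(G')-\eps$.
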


\begin{proof}
	Set $\tau:=\sup_{G\in\G}\lambda(G)$ and $\cP:=\cP(\G;m,t)$.
	Let $C$ and $n_0$ be the constants supplied by
	\cref{thm:fixed-auxiliary}.  
	For every $n\ge n_0$ and every $n$-vertex $H\in\cP$, there is some
	$G\in\G$ such that $H\in\cP(G;m,t)$.  The upper bound in
	\cref{thm:fixed-auxiliary} therefore gives
	\[
	\frac{\delta_\ell(H)}{\binom{n-\ell}{k-\ell}}
	\le c_{m,t}\lambda(G)+Cn^{-1/2}
	\le c_{m,t}\tau+Cn^{-1/2}.
	\]
	Taking the maximum over all such $H$ and using
	\eqref{eq:family-density}, we obtain
	\[
	\gammakl(\F(\cP))\le c_{m,t}\tau.
	\]
	
	For the reverse inequality, fix $\eps>0$ and choose $G\in\G$ such that
	$\lambda(G)>\tau-\eps$.  By the lower bound in
	\cref{thm:fixed-auxiliary}, for every $n\ge n_0$ there exists an
	$n$-vertex $H\in\cP(G;m,t)\subseteq\cP$ satisfying
	\[
	\frac{\delta_\ell(H)}{\binom{n-\ell}{k-\ell}}
	\ge c_{m,t}(\tau-\eps)
	-C\left(n^{-1}+n^{-(k-\ell)/2}\sqrt{\log n}\right).
	\]
	Applying \eqref{eq:family-density} and letting $n\to\infty$ gives
	\[
	\gammakl(\F(\cP))\ge c_{m,t}(\tau-\eps).
	\]
	Since $\eps>0$ was arbitrary, the reverse inequality follows.
\end{proof}

Note that the construction and all the arguments in this
section remain valid when $\ell=2$.  The stronger assumption
$\ell>2$ in the main theorem is used only in the next section, where
\cref{lem:one-sided} is applied to auxiliary $\ell$-graphs; that input
is available only for uniformity at least $3$.

\section{Proof of the main theorem}\label{sec:main-proof}
Throughout this section, fix integers $k>\ell>2$.  We construct, for
an arbitrary $\alpha\in[0,1)$, a nonempty monotone family $\cQ$ such
that $\gammakl(\F(\cQ))=\alpha$,
while every finite subfamily of $\F(\cQ)$ has strictly larger
$\ell$-degree Tur\'an density.  The constructions for positive density
and for zero density are different.

\subsection{Positive densities}\label{sec:positive}
We first treat the case $\alpha>0$.  The proof uses the construction
from \cref{sec:construction} to turn Lagrangian thresholds for auxiliary
$\ell$-graphs into a nested collection of monotone families of
$k$-graphs whose $\ell$-degree Tur\'an densities can be computed
explicitly.  We then choose the threshold that gives density $\alpha$
and use the nesting of these families to show that every finite
subfamily of the resulting forbidden family has strictly larger
density.

\begin{lemma}\label{lem:positive}
Let $k>\ell>2$ and $0<\alpha<1$.  There exists a nonempty monotone
family $\cQ$ of $k$-graphs such that $\gammakl(\F(\cQ))=\alpha$
and $\gammakl(\F')>\alpha$ for every finite subfamily
$\F'\subseteq\F(\cQ)$.
\end{lemma}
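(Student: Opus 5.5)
Since $\ell>2$, \cref{lem:one-sided} applies to $\ell$-graphs: there is $b_\ell\in(0,1)$ such that every $\tau\in(b_\ell,1)$ is the supremum of Lagrangians of $\ell$-graphs with Lagrangian strictly below $\tau$. The plan is to choose $m,t$ so that $\tau:=\alpha/c_{m,t}\in(b_\ell,1)$, realize thresholds $u_i\downarrow\tau$ by families of auxiliary graphs, and take $\cQ$ to be the intersection of the resulting cyclic multipartite families.

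\emph{Choice of parameters.} Set $t=m-\ell$. Then
\[
c_{m,m-\ell}=\frac{(k-\ell)!\binom{m-\ell}{k-\ell}}{m^{k-\ell}}=\prod_{h=0}^{k-\ell-1}\frac{m-\ell-h}{m},
\]
which increases to $1$ as $m\to\infty$. Fix $m\ge k$ large enough that $c_{m,t}>\alpha$ and $\alpha/c_{m,t}>b_\ell$; this is possible since $\alpha<1$ and $b_\ell<1$, so we need $c_{m,t}>\max\{\alpha,\alpha/1\}$ together with $c_{m,t}>\alpha/b_\ell'$ for some $b_\ell'\in(b_\ell,1)$ with $\alpha<b'_\ell$. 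More carefully, pick $\tau\in(\max\{b_\ell,\alpha\},1)$ as the target, then pick $m$ with $c_{m,t}\ge\alpha/\tau$. Since $c_{m,t}$ takes only discrete values, it is cleaner to fix $m$ first with $c_{m,t}>\max\{\alpha,\alpha/b_\ell\cdot b_\ell\}$, namely $c_{m,t}>\alpha$ and $\alpha/c_{m,t}>b_\ell$. The second condition needs $c_{m,t}<\alpha/b_\ell$, which is automatic if $\alpha\ge b_\ell$. In general, replace $b_\ell$ by any smaller admissible value; the cleanest fix is to use the monotonicity in $m$ together with $c_{m,t}\to 1$, choosing $m$ so that $\tau:=\alpha/c_{m,t}\in(b_\ell,1)$. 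This works whenever $\alpha>b_\ell$. For small $\alpha$ I would instead use a smaller $t$: $c_{m,t}$ with $t=k-\ell$ tends to $0$, and the values $c_{m,t}$ for $k-\ell\le t\le m-\ell$ are fine enough (consecutive ratios near $1$ for large $m$) that some pair $(m,t)$ gives $\alpha/c_{m,t}\in(b_\ell,1)$. This arithmetic step is routine but must be checked.

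\emph{Construction.} Choose a strictly decreasing sequence $u_1>u_2>\cdots$ with $u_i\downarrow\tau$ and $u_1<1$. Let $\G_u:=\{G:\lambda(G)<u\}$ be the family of $\ell$-graphs, and put $\cQ_i:=\cP(\G_{u_i};m,t)$ and $\cQ:=\bigcap_i\cQ_i$. Each family is monotone, and the sequence is nested decreasing. Since $u_i>\tau>b_\ell$, \cref{lem:one-sided} gives $\sup_{G\in\G_{u_i}}\lambda(G)=u_i$, so \cref{lem:supremum} yields $\gammakl(\F(\cQ_i))=c_{m,t}u_i>\alpha$. Every $\cQ_i$ contains $\cP(\G_\tau;m,t)$, so $\cQ\supseteq\cP(\G_\tau;m,t)$. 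Again by \cref{lem:one-sided} and \cref{lem:supremum}, this gives $\gammakl(\F(\cQ))\ge c_{m,t}\tau=\alpha$. Also $\cQ\subseteq\cQ_i$ for all $i$, so $\gammakl(\F(\cQ))\le\inf_i c_{m,t}u_i=\alpha$. Nonemptiness is clear, and $\F(\cQ)\neq\varnothing$ since every member is $K^{(k)}_{m(k-1)+1}$-free.

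\emph{Finite subfamilies.} We have $\F(\cQ)=\bigcup_i\F(\cQ_i)$, and the $\F(\cQ_i)$ are increasing in $i$. A finite $\F'\subseteq\F(\cQ)$ therefore lies in some $\F(\cQ_N)$. This gives
\[
\gammakl(\F')\ge\gammakl(\F(\cQ_N))=c_{m,t}u_N>\alpha.
\]
This step is immediate. The main obstacle is the parameter step: ensuring that a pair $(m,t)$ exists with $\alpha/c_{m,t}\in(b_\ell,1)$ for every $\alpha\in(0,1)$. I would handle it by noting that, for fixed large $m$, the ratios $c_{m,t+1}/c_{m,t}=(t+1)/(t+1-k+\ell)$ can be made smaller than $1/b_\ell$ once $t$ is large. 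For large $m$ the values $c_{m,t}$ with $t$ ranging from a large $t_0$ up to $m-\ell$ form a chain of ratio less than $1/b_\ell$ from below $\alpha$ to above $\alpha$. Hence some $c_{m,t}$ lies in $(\alpha,\alpha/b_\ell)$, as needed.
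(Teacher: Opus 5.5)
Your proposal is correct and follows essentially the same route as the paper: the threshold families $\cP(\G_{u};m,t)$ with $u_i\downarrow\tau=\alpha/c_{m,t}$, the sandwich $\cP(\G_\tau;m,t)\subseteq\cQ\subseteq\cQ_i$, and the observation that a finite $\F'\subseteq\F(\cQ)=\bigcup_i\F(\cQ_i)$ already lies in some $\F(\cQ_N)$. The only difference is the choice of $(m,t)$: the paper takes $t=\lfloor\rho m\rfloor$ with $\alpha<\rho^{k-\ell}<\min\{1,\alpha/b_\ell\}$ and lets $m\to\infty$, whereas your final discrete argument fixes a large $m$, uses ratios $(t+1)/(t+1-k+\ell)<1/b_\ell$ for $t\ge t_0$ and $c_{m,t_0}<\alpha<c_{m,m-\ell}$, and also correctly yields $c_{m,t}\in(\alpha,\alpha/b_\ell)$, so you can drop the earlier attempts with $t=m-\ell$.
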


\begin{proof}
	Let $b_\ell\in(0,1)$ be the constant given by
	\cref{lem:one-sided}.  We first choose the parameters $m,t$ so that
	the coefficient $c_{m,t}$ is slightly larger than $\alpha$, but strictly smaller than $\alpha/b_\ell$.  This will ensure that the threshold
	$\tau:=\alpha/c_{m,t}$ lies in the interval $(b_\ell,1)$, where
	\cref{lem:one-sided} applies.
	
	Since
	$0<\alpha<1$, choose $\rho \in(0,1)$ such that
	$\alpha< \rho^{k-\ell}<\min\{1,\alpha/b_\ell\}$.  For each sufficiently large
	integer $m$, put $t:=\lfloor \rho m\rfloor$.  Then
	$k-\ell\le t\le m-\ell$, and
	\[
	c_{m,t}
	=\prod_{j=0}^{k-\ell-1}\frac{\lfloor \rho m\rfloor-j}{m}
	\longrightarrow \rho^{k-\ell}
	\qquad\text{as }m\to\infty.
	\]
	Thus, we can choose $m\ge k$ and $t=\lfloor \rho m\rfloor$ such that $\alpha<c_{m,t}<\min\{1,\alpha/b_\ell\}$.
	Set $\tau:=\alpha/c_{m,t}$.  Then $b_\ell<\tau<1$.
	
	For each $u\in(b_\ell,1)$, define
	\[
	\G(u):=
	\{G:G\text{ is an $\ell$-graph and }\lambda(G)<u\} \qquad {\rm and} \qquad \cQ(u):=\cP(\G(u);m,t).
	\]
	By \cref{lem:one-sided}, we have $\sup_{G\in\G(u)}\lambda(G)=u$,
	and hence \cref{lem:supremum} gives
	\begin{equation}\label{eq:threshold-density}
		\gammakl\bigl(\F(\cQ(u))\bigr)=c_{m,t}u.
	\end{equation}
	Moreover, $\cQ(u)$ is monotone, and these families are nested:
	if $u<v$, then
	$\cQ(u)\subseteq\cQ(v)$.
	
	For $i\ge1$, let
	\[
	\tau_i:=\tau+\frac{1-\tau}{i+1},
	\qquad
	\cQ:=\bigcap_{i\ge1}\cQ(\tau_i).
	\]
	Then we have $\tau_i\downarrow\tau$ and $	\cQ(\tau)\subseteq\cQ\subseteq\cQ(\tau_i)$ for every $i\ge1$.
	In particular, $\cQ$ is monotone and contains $k$-graphs of arbitrarily
	large order.
	By \eqref{eq:threshold-density} and the monotonicity of the density
	with respect to the underlying monotone family, we have
	\[
	\alpha
	=c_{m,t}\tau
	=\gammakl\bigl(\F(\cQ(\tau))\bigr)
	\le \gammakl\bigl(\F(\cQ)\bigr)
	\le \gammakl\bigl(\F(\cQ(\tau_i))\bigr)
	=c_{m,t}\tau_i
	\]
	for every $i\ge1$.  Letting $i\to\infty$, we obtain
	\[
	\gammakl\bigl(\F(\cQ)\bigr)=\alpha.
	\]

	Since the sequence $\cQ(\tau_i)$ is decreasing, the sequence
	$\F(\cQ(\tau_i))$ is increasing.  Moreover,
	\begin{equation}\label{eq:complement-union}
		\F(\cQ)
		=\F\left(\bigcap_{i\ge1}\cQ(\tau_i)\right)
		=\bigcup_{i\ge1}\F(\cQ(\tau_i)).
	\end{equation}
	
	Now let $\F'\subseteq\F(\cQ)$ be a nonempty finite family.  By
	\eqref{eq:complement-union} and the fact that the families $\F(\cQ(\tau_i))$ are
	increasing, there is an index $i$ such that $\F'\subseteq \F(\cQ(\tau_i))$.
	Hence, we have
 	\[
	\gammakl(\F')
	\ge \gammakl(\F(\cQ(\tau_i)))
	=c_{m,t}\tau_i
	>c_{m,t}\tau
	=\alpha.
	\]
	This
	completes the proof.
\end{proof}

\subsection{Zero density}\label{sec:zero}
We now turn to the case $\alpha=0$.  We use a sequence of simpler
cyclic multipartite families whose $\ell$-degree Tur\'an densities
tend to zero as the number of parts increases.  To ensure that no
finite subfamily of the eventual forbidden family already has zero
density, we also show that finitely many of these
constructions can be realized simultaneously, after passing to a
suitable refinement of their underlying partitions.  These two
features will yield a monotone family of density zero for which every
finite subfamily of the full forbidden family has positive density.

\begin{definition}\label{def:uncolored-construction}
Fix $k>\ell>2$.  Let $m\ge k$ and let $\V=(V_0,\ldots,V_{m-1})$ be an ordered
partition of a vertex set $V$ into $m$ possibly empty parts, with
indices taken modulo $m$.  Define $J_m(\V)$ to be the $k$-graph on
$V$ whose edges are the following:
\begin{enumerate}[label=\textup{(\roman*)},leftmargin=*,itemsep=3pt]
\item all $k$-sets meeting every part in at most $\ell-1$ vertices;
\item all sets $S\mathbin{\dot\cup}T$ such that
      $S\in\binom{V_i}\ell$ for some $i\in\mathbb Z_m$ and
      $T\in\binom{V}{k-\ell}$ satisfies
      $|T\cap V_{i+h}|=1$ for every $h\in[k-\ell]$.
\end{enumerate}
Let $\cR_m$ be the family of all subgraphs of
$J_m(\V)$, where
$\V$ ranges over all ordered $m$-partitions.
\end{definition}

The family $\cR_m$ is monotone by the same argument as for
$\cP(G;m,t)$.  Each part of a
witnessing partition is independent, so every member of $\cR_m$ is
$K_{m(k-1)+1}^{(k)}$-free.  In particular,
$\F(\cR_m)\ne\varnothing$.

The family $\cR_m$ is related to the construction from \cref{sec:construction}
by
\begin{equation}\label{eq:uncolored-union}
	\cR_m=\bigcup_{a\ge\ell}\cP(K_a^{(\ell)};m,k-\ell).
\end{equation}
Indeed, every cyclic multipartite $K_a^{(\ell)}$-construction with
$t=k-\ell$ is a subgraph of the corresponding graph $J_m(\V)$.  Conversely, given a partition
$\V$, choose $a\ge\max\{\ell,|V_0|,\ldots,|V_{m-1}|\}$. Then we choose an injection
$\phi_i:V_i\to V(K_a^{(\ell)})$ for each $i \in \mathbb Z_m$ and set
$\phi_{i,T}:=\phi_i$ for every admissible extension set $T$.  Thus we have
\[
H(K_a^{(\ell)};\V,\Phi)=J_m(\V),
\]
which proves \eqref{eq:uncolored-union}.

Assigning weight $1/a$ to every vertex of $K_a^{(\ell)}$ gives
\[
\lambda(K_a^{(\ell)})
\ge \frac{\ell!\binom{a}{\ell}}{a^\ell}
\longrightarrow1
\qquad\text{as }a\to\infty.
\]
Since every Lagrangian is at most $1$, it follows that $\sup_{a\ge\ell}\lambda(K_a^{(\ell)})=1$.
Combining this with \eqref{eq:uncolored-union} and
\cref{lem:supremum}, we obtain
\begin{equation}\label{eq:uncolored-density}
	\gammakl(\F(\cR_m))
	=c_{m,k-\ell}
	=\frac{(k-\ell)!}{m^{k-\ell}}.
\end{equation}

We next show that the intersection of finitely many families
$\cR_m$ still has positive density.

\begin{lemma}\label{lem:intersection}
Let $q\ge1$ and $m_1,\ldots,m_q\ge k$.  Then
\begin{equation}
\label{eq:intersection-lower}
 \gammakl\left(\F\left(\bigcap_{j=1}^q\cR_{m_j}\right)\right)
 \ge\prod_{j=1}^q\frac{(k-\ell)!}{m_j^{k-\ell}}>0.
\end{equation}
\end{lemma}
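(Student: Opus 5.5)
We will prove \eqref{eq:intersection-lower} by exhibiting, for every large $n$, one $n$-vertex $k$-graph lying in every $\cR_{m_j}$ simultaneously and having large minimum $\ell$-degree; \eqref{eq:family-density} then gives the bound. The idea is to take a common refinement of $q$ cyclic partitions. Put $M:=\prod_j m_j$ and identify the vertex set, up to rounding, with a balanced partition into $M$ cells indexed by $\mathbb Z_{m_1}\times\cdots\times\mathbb Z_{m_q}$. Each cell has size $n/M\pm1$. For each $j$, let $\V^{(j)}$ be the $m_j$-partition obtained by projecting to the $j$-th coordinate; each of its parts is a union of $M/m_j$ cells.

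The candidate $k$-graph is
\[
H:=\bigcap_{j=1}^q J_{m_j}\bigl(\V^{(j)}\bigr),
\]
meaning the $k$-graph on $V$ whose edge set is the intersection of the edge sets. Every $J_{m_j}(\V^{(j)})$ is a witness, so $H$ is a subgraph of each of them and hence lies in $\bigcap_j\cR_{m_j}$. It remains to bound $\delta_\ell(H)$ from below.

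Fix an $\ell$-set $S$. For each coordinate $j$, we want a set of allowed extensions $T$ such that $S\cup T$ is an edge of $J_{m_j}(\V^{(j)})$, and these sets of allowed extensions must have a large common intersection. We choose them coordinate by coordinate. If $S$ lies inside one part $V^{(j)}_{i}$, we require that $T$ has exactly one vertex in each of $V^{(j)}_{i+1},\dots,V^{(j)}_{i+k-\ell}$; this is a type (ii) edge. If $S$ meets at least two parts of $\V^{(j)}$, then $|S\cap V^{(j)}_r|\le\ell-1$ for all $r$. We then again pick $k-\ell$ prescribed distinct parts, say cyclically consecutive ones, which is possible since $m_j\ge k$, and require one vertex of $T$ in each, with those parts avoiding $S$ in coordinate $j$. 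In this case every part meets $S\cup T$ in at most $\ell-1$ vertices, since a prescribed part receives one vertex of $T$ and no vertex of $S$, and $\ell\ge 2$. So we get a type (i) edge.

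In every case, for each $j$ we have prescribed an ordered list of $k-\ell$ distinct residues $r^{(j)}_1,\dots,r^{(j)}_{k-\ell}\in\mathbb Z_{m_j}$. We now take $T=\{w_1,\dots,w_{k-\ell}\}$ with $w_h$ in the cell $(r^{(1)}_h,\dots,r^{(q)}_h)$. These $k-\ell$ cells are distinct, and for each $j$ the set $T$ meets each prescribed part of $\V^{(j)}$ exactly once. Thus $S\cup T$ is an edge of every $J_{m_j}(\V^{(j)})$, hence of $H$, whenever $T$ avoids $S$.

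Counting, the number of such $T$ is at least $(n/M-1-\ell)^{k-\ell}$. Dividing by $\binom{n-\ell}{k-\ell}\sim n^{k-\ell}/(k-\ell)!$ gives a normalized degree of at least $(k-\ell)!/M^{k-\ell}-o(1)=\prod_j (k-\ell)!/m_j^{k-\ell}\cdot ((k-\ell)!)^{1-q}-o(1)$. This is positive, but it is weaker than \eqref{eq:intersection-lower} by the factor $((k-\ell)!)^{q-1}$.

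The main obstacle is recovering this lost factor. To do so, we allow the order of the prescribed parts to vary independently across coordinates: in coordinate $j$, any bijection between $T$'s vertices and the prescribed parts works. Counting $T$ as an unordered set of $k-\ell$ cells whose $j$-th coordinates form exactly the prescribed set for every $j$, we get $((k-\ell)!)^{q}/(k-\ell)!$ ordered-versus-unordered choices. The count of $T$ is then at least $((k-\ell)!)^{q-1}(n/M)^{k-\ell}(1-o(1))$, and the normalized degree is at least $((k-\ell)!)^q/M^{k-\ell}-o(1)$, which is exactly the right-hand side of \eqref{eq:intersection-lower}. We must check that these cell tuples are distinct, that distinct tuples give disjoint families of $T$, and that $S$ removes only $O(n^{k-\ell-1})$ choices; all of this is routine.
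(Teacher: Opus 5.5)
Your proof is correct and is essentially the paper's argument. Both use the common refinement into $M$ balanced cells and the graph $H=\bigcap_j J_{m_j}(\V^{(j)})$, and both count extensions whose cells project, in each coordinate, bijectively onto a prescribed set of $k-\ell$ parts. Your count of $((k-\ell)!)^{q}/(k-\ell)!$ unordered cell sets is exactly the paper's ordered-tuple count $\prod_j A_j(S)/(k-\ell)!$ with $A_j(S)\ge(k-\ell)!$.

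One small slip: when $S$ meets at least two parts of $\V^{(j)}$, you cannot always take the prescribed parts both cyclically consecutive and disjoint from $S$. For example, with $k=6$, $\ell=3$, $m_j=6$ and $S$ meeting parts $0$ and $3$, every cyclically consecutive triple contains $0$ or $3$. Consecutiveness is not needed for a type (i) edge, though: any $k-\ell$ parts avoiding $S$ work, and these exist since $m_j-\ell\ge k-\ell$.
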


\begin{proof}
For every sufficiently large $n$, let $V$ be an $n$-element vertex set, and put $M:=\prod_{j=1}^q m_j$. 
For each
$\boldsymbol a=(a_1,\ldots,a_q)\in
\prod_{j=1}^q\{0,\ldots,m_j-1\}$,
choose a set $W_{\boldsymbol a}$ so that $V=\mathbin{\dot\bigcup}_{\boldsymbol a}W_{\boldsymbol a}$
and the sizes of the $M$ sets $W_{\boldsymbol a}$ differ by at most
one.
For each $j\in[q]$, the $j$-th coordinate defines an ordered
$m_j$-partition $\V^{(j)}$ by
\[
 V_i^{(j)}:=
 \bigcup_{\boldsymbol a:\,a_j=i}W_{\boldsymbol a}
 \qquad(0\le i<m_j).
\]
Let
$J_j:=J_{m_j}(\V^{(j)})$, and define $H$ by $ E(H):=\bigcap_{j=1}^q E(J_j)$.
Then $H\in\bigcap_{j=1}^q\cR_{m_j}$. For $v\in V$, let $\boldsymbol a(v)
=(a_1(v),\ldots,a_q(v))$
be the index of the unique part \(W_{\mathbf a}\) containing $v$.  Thus
$v\in V_{a_j(v)}^{(j)}$ for every $j\in[q]$.

Fix $S\in\binom V\ell$.  For $j\in[q]$, let $h_j$ be the number of
parts of $\V^{(j)}$ met by $S$.
We now count ordered $(k-\ell)$-tuples
$(v_1,\ldots,v_{k-\ell})$ of distinct extension vertices.  For each
$j\in[q]$, we impose the following conditions on coordinate $j$.
If $h_j=1$, say $S\subseteq V_i^{(j)}$, require the extension
vertices to occupy
$V_{i+1}^{(j)},\ldots,V_{i+k-\ell}^{(j)}$, one in each part.
There are $(k-\ell)!$ assignments of these part indices to the ordered
vertices.  If
$h_j\ge2$, require their part indices to be distinct and to avoid all
parts met by $S$; there are
$(k-\ell)!\binom{m_j-h_j}{k-\ell}$ assignments.  Accordingly, set
\[
 A_j(S):=
 \begin{cases}
 (k-\ell)!,& h_j=1,\\[1mm]
 (k-\ell)!\binom{m_j-h_j}{k-\ell},& h_j\ge2.
 \end{cases}
\]
Since $h_j\le\ell$ and $m_j\ge k$, we have
$m_j-h_j\ge k-\ell$ and hence $A_j(S)\ge(k-\ell)!$ in both cases.

There are $\prod_{j=1}^q A_j(S)$ choices for the coordinate
assignments.  Each choice specifies $k-\ell$ pairwise distinct parts
$W_{\boldsymbol a}$, all disjoint from $S$.  Hence it gives $\left(\frac nM\right)^{k-\ell}
-O(n^{k-\ell-1})$
ordered lists of distinct vertices outside $S$.  Every resulting set
$S\cup\{v_1,\ldots,v_{k-\ell}\}$ is an edge of $J_j$ for each $j$: it is of
type~\textup{(ii)} when $h_j=1$ and of type~\textup{(i)} when
$h_j\ge2$.

The requirements are invariant under permutations of the extension
vertices, so every unordered extension is counted exactly $(k-\ell)!$
times.  Therefore
\[
 d_H(S) \ge\frac{n^{k-\ell}}{(k-\ell)!}
       \prod_{j=1}^q\frac{A_j(S)}{m_j^{k-\ell}}
       -O(n^{k-\ell-1}) \ge\frac{n^{k-\ell}}{(k-\ell)!}
       \prod_{j=1}^q\frac{(k-\ell)!}{m_j^{k-\ell}}
       -O(n^{k-\ell-1}).
\]
Dividing by
$\binom{n-\ell}{k-\ell}$ and letting $n\to\infty$ proves
\eqref{eq:intersection-lower}.
\end{proof}

The common refinement in \cref{lem:intersection} is essential: the
lower bound in \eqref{eq:intersection-lower} does not follow from the
individual densities of the families $\cR_{m_j}$ alone.

\begin{lemma}\label{lem:zero-density}
	There exists a nonempty monotone family $\cQ$ of $k$-graphs such
	that $\gammakl(\F(\cQ))=0$ and $\gammakl(\F')>0$
	for every finite subfamily $\F'\subseteq\F(\cQ)$.
\end{lemma}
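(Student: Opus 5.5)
Take $\cQ:=\bigcap_{m\ge k}\cR_m$; one could restrict to any infinite set of indices, but all $m\ge k$ is simplest. Since each $\cR_m$ is monotone, so is $\cQ$. It contains $k$-graphs of every order: for instance the empty $k$-graph on $n$ vertices lies in every $\cR_m$, and \cref{lem:intersection} with $q=1$ already gives large members. Moreover $\F(\cQ)\supseteq\F(\cR_k)\ne\varnothing$, because $K^{(k)}_{k(k-1)+1}\notin\cR_k$. So $\F(\cQ)$ is a nonempty forbidden family, and $\cQ$ is nonempty as a monotone family.

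\emph{Density zero.} For every $m\ge k$ we have $\cQ\subseteq\cR_m$. By the monotonicity of the density with respect to the underlying monotone family and by \eqref{eq:uncolored-density},
\[
0\le\gammakl(\F(\cQ))\le\gammakl(\F(\cR_m))=\frac{(k-\ell)!}{m^{k-\ell}}.
\]
Letting $m\to\infty$ gives $\gammakl(\F(\cQ))=0$.

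\emph{Finite subfamilies.} Let $\F'\subseteq\F(\cQ)$ be nonempty and finite. By the identity $\F\bigl(\bigcap_i\cP_i\bigr)=\bigcup_i\F(\cP_i)$ from \cref{sec:monotone-families},
\[
\F(\cQ)=\bigcup_{m\ge k}\F(\cR_m).
\]
Each member $F\in\F'$ therefore lies in some $\F(\cR_{m_F})$. Put $m_1,\ldots,m_q$ for the finitely many indices $m_F$ with $F\in\F'$. Then
\[
\F'\subseteq\bigcup_{j=1}^q\F(\cR_{m_j})=\F\Bigl(\bigcap_{j=1}^q\cR_{m_j}\Bigr).
\]
Since enlarging the forbidden family can only decrease the density, \cref{lem:intersection} gives
\[
\gammakl(\F')\ge\gammakl\Bigl(\F\Bigl(\bigcap_{j=1}^q\cR_{m_j}\Bigr)\Bigr)\ge\prod_{j=1}^q\frac{(k-\ell)!}{m_j^{k-\ell}}>0.
\]
This proves the statement.

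The only step needing any care is this last reduction. Unlike the positive-density case, the families $\cR_m$ are not nested, so a finite subfamily need not lie in a single $\F(\cR_m)$. This is exactly why the finite-intersection bound of \cref{lem:intersection} is needed rather than just \eqref{eq:uncolored-density}. With that lemma available, the remaining argument is bookkeeping with the identities of \cref{sec:monotone-families}.
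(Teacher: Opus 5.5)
Your proposal is correct and is essentially the paper's own proof: the same family $\cQ=\bigcap_{m\ge k}\cR_m$, the same squeeze $0\le\gammakl(\F(\cQ))\le (k-\ell)!/m^{k-\ell}\to 0$ via \eqref{eq:uncolored-density}, and the same reduction of a finite $\F'\subseteq\F(\cQ)$ to finitely many witnessing indices, finished by \cref{lem:intersection}. The differences are cosmetic: you intersect only over the witnessing indices $m_F$ rather than over all $k\le m\le M$. Also, your side remark that \cref{lem:intersection} with $q=1$ gives large members concerns a single $\cR_{m_1}$, not $\cQ$, but it is not needed, since the edgeless $k$-graphs already lie in $\cQ$.
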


\begin{proof}
	Set $\cQ:=\bigcap_{m\ge k}\cR_m$.
	This family is monotone and contains every edgeless $k$-graph, so it
	is nonempty.  Since $\cQ\subseteq\cR_m$ for every $m\ge k$,
	\eqref{eq:uncolored-density} and the monotonicity of the density give
	\[
	0\le\gammakl(\F(\cQ))
	\le\inf_{m\ge k}\gammakl(\F(\cR_m))
	=\inf_{m\ge k}\frac{(k-\ell)!}{m^{k-\ell}}
	=0.
	\]
	Thus $\gammakl(\F(\cQ))=0$.
	
	Let $\F'\subseteq\F(\cQ)$ be a nonempty finite family.  For every
	$F\in\F'$, choose $m(F)\ge k$ such that $F\notin\cR_{m(F)}$, and set $M:=\max_{F\in\F'}m(F)$.
	Every member of $\bigcap_{m=k}^M\cR_m$ is $\F'$-free.  Indeed, if
	$H\in\bigcap_{m=k}^M\cR_m$ contained some $F\in\F'$, then the
	monotonicity of $\cR_{m(F)}$ would imply
	$F\in\cR_{m(F)}$, a contradiction.  It follows from
	\cref{lem:intersection} that
	\[
		\gammakl(\F')
		\ge
		\gammakl\left(
		\F\left(\bigcap_{m=k}^M\cR_m\right)
		\right) \ge
		\prod_{m=k}^M\frac{(k-\ell)!}{m^{k-\ell}}
		>0.
	\]
	This completes
	the proof.
\end{proof}

We can now finish the proof of the main theorem.

\begin{proof}[Proof of \cref{thm:main}]
	Fix $\alpha\in[0,1)$.  Let $\cQ$ be the family supplied by
	\cref{lem:positive} if $\alpha>0$, and by \cref{lem:zero-density}
	if $\alpha=0$.  Set $\F:=\F(\cQ)$.
	Then $\gammakl(\F)=\alpha$, while $\gammakl(\F')>\alpha$ for every
	finite subfamily $\F'\subseteq\F$.
	In particular, $\F$ is infinite, since otherwise taking
	$\F'=\F$ would give a contradiction.
\end{proof}

\section{Conclusion}\label{sec:conclusion}
In this paper, we proved that, for every $k>\ell>2$ and
every $\alpha\in[0,1)$, there exists an infinite family $\F$ of
$k$-graphs with $\ell$-degree Tur\'an density $\alpha$, while every
finite subfamily of $\F$ has strictly larger density. In particular,
this determines the full spectrum $\Gamma_{\infty}^{k,\ell}=[0,1)$
in this range, which proves the conjecture of
Mubayi and Zhao on codegree Tur\'an densities for every $k>3$ and
answers the question of Lo and Markstr\"om in this range.
The proof combines a cyclic multipartite construction
with hypergraph Lagrangians to obtain nested monotone families
whose densities can be prescribed.

It remains natural to ask whether the stronger conclusion of
\cref{thm:main} also holds when $\ell=2$.  The cyclic multipartite
construction and the matching estimates in \cref{sec:construction}
remain valid in this setting.  The difficulty lies in the
positive-density argument, which uses a one-sided approximation of
Lagrangian values obtained from an interval of ordinary hypergraph
Tur\'an densities near $1$.  For graphs, the spectrum in
\eqref{eq:graph-spectrum} is discrete, so this input is no longer
available.  For $k=3$, the case $\ell=2$ is precisely the remaining
case of the conjecture of Mubayi and Zhao.  For $k\ge4$, Li and
Liu~\cite{Li-Liu-26} have determined the full spectrum for
$\ell=2$, but the stronger requirement that every finite subfamily
have strictly larger density remains open.

\section*{Declaration on the use of AI}
The authors used AI for language editing and exploratory discussion of proof presentation.  All mathematical statements, proofs, and references were independently written and finalized by the authors, who take full responsibility for the content of the paper.

\bibliographystyle{abbrv}
\bibliography{ref}

\end{document}